\let\color@begingroup\relax
   \let\color@endgroup\relax}{}%
\def\fix@ieeecolor@hbox#1{%
  \hbox{\color@begingroup#1\color@endgroup}}
\patchcmd\@makecaption{\hbox}{\fix@ieeecolor@hbox}{}{\FAILED}
\patchcmd\@makecaption{\hbox}{\fix@ieeecolor@hbox}{}{\FAILED}
\algrenewcommand\algorithmicensure{\textbf{Output:}}
\DeclareMathOperator*{\argmin}{\arg\!\min}
\newenvironment{problem}{\textbf{Problem:}}{}
\newtheorem{assumption}{Assumption}
\newtheorem{remark}{Remark}
\newtheorem{definition}{Definition}
\newtheorem{theorem}{Theorem}
\newtheorem{lemma}{Lemma}
\newtheorem{proposition}{Proposition}
\newcommand{\cmark}{{\color{green}\ding{51}\xspace}}
\newcommand{\xmark}{{\color{red}\ding{55}\xspace}}
\begin{document}

\title{(Un)supervised Learning of\\ Maximal Lyapunov Functions}

\author{Matthieu Barreau, Nicola Bastianello
\thanks{This work was partially supported by the Wallenberg AI, Autonomous Systems and Software Program (WASP) funded by the Knut and Alice Wallenberg Foundation, and by the European Union’s Horizon Research and Innovation Actions program under grant agreement No. 101070162.}
\thanks{The authors are with Digital Futures, and the Division of Decision and Control Systems, KTH Royal Institute of Technology, Stockholm, Sweden. {\tt\small \{ barreau | nicolba \}@kth.se}.}
}

\maketitle

\begin{abstract}
In this paper, we address the problem of discovering maximal Lyapunov functions, as a means of determining the region of attraction of a dynamical system.
To this end, we design a novel neural network architecture, which we prove to be a universal approximator of (maximal) Lyapunov functions. The architecture combines a local quadratic approximation with the output of a neural network, which models global higher-order terms in the Taylor expansion.
We formulate the problem of training the Lyapunov function as an unsupervised optimization problem with dynamical constraints, which can be solved leveraging techniques from physics-informed learning. We propose and analyze a tailored training algorithm, based on the primal-dual algorithm, that can efficiently solve the problem.
Additionally, we show how the learning problem formulation can be adapted to integrate data, when available.
We apply the proposed approach to different classes of systems, showing that it matches or outperforms state-of-the-art alternatives in the accuracy of the approximated regions of attraction.
\end{abstract}

\begin{IEEEkeywords}
Stability of nonlinear systems, Neural networks, Robust control, Machine learning, Region of attraction
\end{IEEEkeywords}


\section{Introduction}
A wide range of natural phenomena and engineered technologies can be modeled as dynamical systems \cite{aastrom2021feedback}, from population dynamics, to traffic networks, from robots to the power grid. Therefore, analyzing the properties of dynamical systems is crucial to gaining insights into different application scenarios.
The most fundamental of these properties is stability, which ensures the evolution of a dynamical system towards a state of equilibrium. The predominant paradigm in stability analysis is the Lyapunov approach, where we seek to identify an energy function for the system \cite{khalil2002nonlinear}, which certifies its stability.
Lyapunov theory has proven a valuable tool owing to its versatility, as it can be readily applied to a range of contexts, including controlled systems \cite{sontag2013mathematical}, performance certification \cite{veenman2016robust}, high- or infinite-dimensional systems \cite{curtain2012introduction}, and discrete-time systems \cite{khalil2002nonlinear}.

Nevertheless, discovering a Lyapunov function for a general system represents a significant challenge, as evidenced by decades of literature on the subject. In the context of linear, time-invariant systems, it is well established that the existence of a quadratic Lyapunov function is equivalent to global exponential stability \cite{lyapunov1992general}. Furthermore, the determination of a quadratic Lyapunov function is equivalent to the solution of a linear matrix inequality, for which there exist efficient numerical solvers \cite{boyd1994linear}. In general, for non-linear systems, the Lyapunov function is not quadratic, and there is then no general procedure \cite{khalil2002nonlinear}.

Furthermore, the stability of a dynamical system may be constrained to a limited region around an equilibrium, called a basin of attraction. This region includes all initial states that will evolve towards the given equilibrium, and may not coincide with the entire state space. Consequently, an additional challenge is to compute a Lyapunov function that leads to the largest basin of attraction, called a maximal Lyapunov function \cite{vannelli1985maximal}. Computing maximal Lyapunov functions is difficult in general, but some advancements have been made for specific classes of dynamical systems.
In particular, for polynomial systems, sum-of-squares techniques have been leveraged to estimate a maximal Lyapunov function \cite{tan2008stability,jones2021converse,henrion2013convex}. However, this approach suffers from numerical errors when dealing with high-dimensional systems and does not accurately approximate the region of attraction for stiff systems \cite{liu2023physics}.
For a more general class of systems, \cite{vannelli1985maximal,chesi2013rational,valmorbida2017region} restrict their search to rational Lyapunov functions and provide an algorithm to find a maximal Lyapunov function. However, computing rational Lyapunov functions poses numerical challenges.
Finally, it is possible to estimate the region of attraction by restricting to quadratic Lyapunov functions, and then using robust control theory to encapsulate non-linearities in a cone \cite{tarbouriech2011stability}. The drawback of this approach is its conservatism, which leads to a poor estimate of the region of attraction for complex systems.

From the discussion above, it is clear that computing maximal Lyapunov functions for general systems is quite challenging. Therefore, in this paper, we adopt a different approach, based on the Physics-Informed Learning (PIL) paradigm \cite{raissi2019physics,karniadakis2021physics}. PIL was developed to incorporate physical priors in the training procedure, in the form of dynamical constraints, allowing to learn more accurate models than purely data-driven solutions \cite{cai2021physics,kissas2020machine,bai2022application}.
The key insight motivating our adoption of PIL, then, is the fact that (maximal) Lyapunov functions must satisfy specific constraints expressed in terms of differential inequalities, which PIL is uniquely suited to incorporate.
In particular, we offer the following contributions:
\begin{itemize}
    \item We propose a novel neural network architecture that serves as a universal approximator for maximal Lyapunov functions. The architecture is physics-informed as it is constructed to encode several properties of (maximal) Lyapunov functions. In particular, this is accomplished by starting from a Taylor expansion of the target Lyapunov function, whose higher-order terms are modeled by a neural network.

    \item We introduce a suitable unsupervised learning framework to train maximal Lyapunov functions with the proposed architecture. The framework includes a novel loss function definition, which enforces the remaining properties of maximal Lyapunov functions not incorporated in the architecture itself.
    We further show how to convert the training problem into a supervised learning one, by integrating data from a simulator of the system, showcasing the flexibility of the proposed framework. We remark that the integration of data is, however, not necessary.

    \item Additionally, we provide a tailored training algorithm based on state-of-the-art primal-dual techniques. The algorithm efficiently solves the learning problem by leveraging the specific neural architecture to enforce local stability. We show that the proposed algorithm provides increased robustness, yielding consistent results with low sensitivity to the initialization.

    \item We demonstrate the performance of our approach on a range of examples, comparing with state-of-the-art alternatives. In particular, we show improvement in how accurately the learned maximal function identifies the region of attraction of the tested systems.
\end{itemize}

\subsection{Related works}

\begin{table}[!ht]
\centering
\caption{Comparison with selected related works.}
\label{tab:comparison-references}
    \begin{tabular}{ccccc}
    \hline
    [Ref.] & Unsuperv.  & Restrictions$^\dagger$ & \thead{Universal\\approx. of\\Lyapunov f.?} & \thead{Region of\\attraction} \\
    \hline
    \cite{henrion2013convex} & \cmark & Polynomial & \cmark & \cmark \\
    \cite{gaby2022lyapunovnet} & \cmark & $\mathcal{D} = [-1, 1]^n$ & \thead{\xmark} & \xmark \\
    \cite{liu2023physics} & \xmark & & \xmark & \cmark \\
    \cite{LarsGrune} & \cmark & \thead{Compositional\\Lyapunov f.} & \xmark & \xmark \\
    this work & \cmark \& \xmark$^*$ & $\mathcal{D} = [-1, 1]^n$ & \cmark & \cmark \\
    \hline
    \end{tabular}
    \\\vspace{0.1cm}
    $^\dagger$ Lipschitz-continuity of $f$ is always required \\
    $^*$ meaning that both approaches are allowed
\end{table}

Constructing Lyapunov functions using neural networks is an active area of research, initiated by the seminal work \cite{325708}, and we refer to the recent works \cite{pimlcontrol,dawson2023safe} for a comprehensive survey.
In the following, we highlight some key milestones and discuss relevant related works; the comparison of these works with our proposed approach is summarized in Table~\ref{tab:comparison-references}.

The (non-convex) optimization problem that needs to be solved to learn a neural network Lyapunov function was first formulated in \cite{LarsGrune}. This work also provided guarantees on the approximation capabilities of neural networks, but did not address learning the region of attraction or focus on the robustness of the training.
The work \cite{kolter2019learning} instead focused on constraining the architecture of the trained Lyapunov function, to enforce some of the properties that a Lyapunov function needs to verify. However, the objective was to enhance the accuracy when learning the dynamics of stable systems, rather than evaluating their region of attraction.

The authors of \cite{chang2019neural} proposed to use a neural Lyapunov function to derive a control law for the system, which comes with a provable guarantee of stability. The proposed approach also provides an estimate of the region of attraction, which is iteratively refined. However, the region is computed via regularization, leading to a result that is highly sensitive to the hyperparameter, not guaranteed to converge, and often conservative.

The lack of formal guarantees has been investigated more thoroughly in \cite{abate2020formal}. They generate Lyapunov neural networks using symbolic computations, which offer a trade-off between analytical and numerical methods. The method also relies on training using a verifier which is building counterexamples to get a more robust learning. However, the method works only for global asymptotically stable systems, which means that the region of attraction is $\mathbb{R}^n$. This is a very restrictive assumption since many nonlinear systems have several equilibrium points and thus are not globally asymptotically stable. 

Other works that have focused on applying neural Lyapunov functions to control are \cite{dawson2022safe} and \cite{gaby2022lyapunovnet}. Both works use neural networks to estimate a control Lyapunov function, which is used to design a stabilizing control input for the system. In particular, \cite{dawson2022safe} focuses on safety in robotic applications, while \cite{gaby2022lyapunovnet} proposes a better neural network architecture that enforces positive definiteness. Finally, \cite{zhou2022neural,abate2021fossil} worked on a similar topic, trying to combine all the ideas previously cited into one. They estimated the region of attraction of an equilibrium of a partially unknown nonlinear autonomous system using satisfiability modulo theories as a verifier. All these works obtain a rather conservative estimate of the region of attraction, and the obtained estimate is not robust across several trainings. Similar topics with the same conclusions have been investigated in a discrete-time context by \cite{richards2018lyapunov,mehrjou2020neural,dai2021lyapunov,mukherjee2022neural} to cite a few.

The recent work \cite{liu2023physics} aims at leveraging Zubov's theorem \cite{zubov1961methods} to learn neural Lyapunov functions that maximize the basin of attraction. Consequently, the learning is more robust and almost always estimates the true region of attraction. However, the approach of \cite{liu2023physics} relies on a simulator of the dynamical system, which generates trajectories from specific initial states, and utilizes them as data in the training process. This limits the applicability of \cite{liu2023physics}, as, for example, when designing controllers, a simulator of the system is not accessible.

\subsection{Organization}

The paper is organized as follows. In section~\ref{sec:preliminaries} we review preliminary information and formulate the problem. In section~\ref{sec:neural-lyapunov} we propose the Taylor-Neural Lyapunov functions architecture. Sections~\ref{sec:unsupervised} and~\ref{sec:supervised} then focus on unsupervised and supervised training of such Lyapunov functions, respectively. Section~\ref{sec:numerical} presents and discusses numerical simulations, and Section~\ref{sec:conclusions} concludes the paper.

\subsection{Notation} Throughout the paper, $\mathbb{R}$ refers to the set of real numbers, $C^3(I, J)$ is the set of three-times differentiable functions from $I$ to $J$. For $x = \left[ x_1 \ \cdots \ \ x_n \right]^{\top} \in \mathbb{R}^n$ and $\psi: \mathbb{R} \to \mathbb{R}$, we use the notation $\psi.(x) = \left[ \psi(x_1) \ \cdots \ \psi(x_n) \right]^{\top}$, referring to the element-wise operation. For $x \in \mathbb{R}$, we define the rectified linear function as $\left[ x \right]_+ = \max(0, x)$. For $x, y \in \mathbb{R}^n$, the Euclidean scalar product is written as $\langle x , y \rangle$, and its Euclidean norm is defined as $\| x \| = \sqrt{\langle x , x \rangle}$. For two squared symmetric matrices $A$ and $B$ of the same size, $A \prec B$ means that $A - B$ has strictly negative eigenvalues. We denote the boundary of a set $\mathcal{D} \subset \mathbb{R}^n$ by $\partial \mathcal{D}$. Discrete sets are denoted with a subscript $s$ as $A_s$, and $\left| \mathcal{A}_s \right|$ refers to its cardinal.

\section{Preliminaries and Problem Formulation}\label{sec:preliminaries}

In this section, we review the necessary preliminaries, formulate the problem of finding a maximal Lyapunov function, introduce the working assumptions, and discuss the setup.

\subsection{Preliminaries}
We consider the following dynamical system:
\begin{equation} \label{eq:dynamical_system}
    \left\{ \begin{array}{ll}
         \dot{x}(t) = f(x(t)),  \quad & t \geq 0, \\
         x(0) = x_0 \in \mathcal{D} = (-1, 1)^n & 
    \end{array} \right.
\end{equation}
where $n \in \mathbb{N} \backslash \{0\}$ is the dimension of the system, and $f: \mathbb{R}^n \to \mathbb{R}^n$ is Lipschitz continuous and, in general, non-linear. Under these conditions, there exists a unique class $C^1$ solution to~\eqref{eq:dynamical_system} which is forward complete \cite{angeli1999forward}. We then denote by $t \mapsto \phi(x_0, t)$ the unique trajectory of \eqref{eq:dynamical_system} which starts at $x_0 \in \mathcal{D}$.
Additionally, in the following we assume without loss of generality that the origin $0$ is an equilibrium point of $f$, i.e., $f(0) = 0$.

In this paper, we are interested in the (local) asymptotic stability of the origin, and in particular in characterizing the set of initial conditions that verify it -- the so-called region of attraction. The following definitions review these concepts.

\begin{definition}[Local asymptotic stability {\cite[Def.~1.3]{glad2018control}}]\label{def:local-stability}
    Let $\mathcal{R} \subseteq \mathcal{D}$ be an open and connected set containing the origin. Then, the origin is said to be \textbf{locally asymptotically stable} for~\eqref{eq:dynamical_system} in $\mathcal{R}$ if for each $\varepsilon > 0$ there exists $\delta > 0$ such that 
    \[
        \forall x_0 \in \mathcal{R}, \quad \| x_0 \| \leq \delta \quad \Rightarrow \quad \forall t > 0, \quad \| \phi(x_0, t) \| \leq \varepsilon,
    \]
    and $\| \phi(x_0, t) \| \to 0$ as $t \to \infty$.
\end{definition}

We remark that the state of the system evolves within the bounded set $\mathcal{D}$. Thus, we are interested in the invariant sets $\mathcal{R} \subset \mathcal{D}$ of initial conditions that guarantee convergence to the equilibrium.

\begin{definition}[Basin and region of attraction {\cite[p.~122]{khalil2002nonlinear}}]\label{def:region-attraction}
    The sets $\mathcal{R}$ characterized in Definition~\ref{def:local-stability} are called \textbf{basins of attraction} of the origin.
    The largest invariant basin of attraction $\mathcal{R}^*$ is called the \textbf{region of attraction} and is defined as follows:
    \[
        \mathcal{R}^* = \left\{ x_0 \in \mathcal{D} \ \Bigg| \
            \begin{array}{l} \forall t>0, \ \phi(x_0, t) \in \mathcal{D}, \\ 
        \displaystyle \lim_{t \to \infty} \| \phi(x_0, t) \| = 0,
        \end{array}
        \right\}.
    \]
\end{definition}

\smallskip

Finding an analytical expression of the region of attraction is difficult for most systems. Thus, we commonly rely on Lyapunov functions to characterize it as follows.

\begin{definition}[Lyapunov function {\cite[Theorem~4.1]{khalil2002nonlinear}}]\label{def:lyapunov-function}
    A continuously differentiable function $V: \mathcal{D} \to \mathbb{R}^+$ is said to be a \textbf{local Lyapunov function} for $f$ on $\mathcal{D}$ if
    \begin{subequations} \label{eq:lyapunov_definition}
    \begin{align}
        V(0) = 0, & \label{eq:definite} \\
        V(x) > 0 & \text{ for all } x \in \mathcal{D} \backslash \{0\}, \label{eq:positive} \\
        \frac{\partial V}{\partial x}(x) \cdot f(x) < 0 & \text{ for all } x \in \mathcal{D} \backslash \{0\} \text{ s.t. } V(x) < 1, \label{eq:decreasing} \\
        V(x) \geq 1 & \text{ for all } x \in \partial \mathcal{D} \text{ s.t. } f(x) \cdot \vec{n}(x) \geq 0, \label{eq:boundary}
    \end{align}
    \end{subequations}
    where $\vec{n}(x)$ denotes the outgoing normal vector to $\mathcal{D}$ at the boundary points $x \in \partial \mathcal{D}$.
\end{definition}

\smallskip

We remark that Definition~\ref{def:lyapunov-function} differs from the classic definition \cite[Theorem 4.1]{khalil2002nonlinear} due to the addition of equation~\eqref{eq:boundary}. This condition has been added to ensure that $\mathcal{R}$ is an invariant set within the bounded set $\mathcal{D}$ where the system's state evolves. Indeed,~\eqref{eq:boundary} ensures that the points on the boundary $\partial \mathcal{D}$ are included in the region of attraction only if the flow enters the domain.

We are now ready to characterize the basins of attraction using a Lyapunov function. To this end, we can apply the Lyapunov direct method \cite[Theorem~4.1]{khalil2002nonlinear}, yielding the following.

\begin{theorem}[Lyapunov characterization of basin of attr.]\label{thm:basin-attraction}
    If there exists a local Lyapunov function $V$, as characterized by Definition~\ref{def:lyapunov-function}, then \textit{i)} the origin is asymptotically stable, and \textit{ii)} the set $\mathcal{R}(V) = \left\{ x \in \mathcal{D} \ | \ V(x) < 1 \right\}$ is a basin of attraction (see Definition~\ref{def:region-attraction}).
\end{theorem}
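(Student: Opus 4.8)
The plan is to adapt the classical Lyapunov direct method to the bounded domain $\mathcal{D}$, the only genuine novelty being the way condition~\eqref{eq:boundary} is used to prevent trajectories from escaping through $\partial\mathcal{D}$. The central object is the scalar map $t \mapsto v(t) := V(\phi(x_0,t))$ along a trajectory, whose time derivative is $\dot v(t) = \frac{\partial V}{\partial x}(\phi(x_0,t))\cdot f(\phi(x_0,t))$ by the chain rule and \eqref{eq:dynamical_system}. I would split the argument into (a) forward invariance of $\mathcal{R}(V)$, (b) attractivity of the origin from every $x_0\in\mathcal{R}(V)$, and (c) Lyapunov stability; together (b)--(c) give claim \textit{i)} and (a)--(b) give claim \textit{ii)}.

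For forward invariance, fix $x_0\in\mathcal{R}(V)$ and let $T=\inf\{t>0: \phi(x_0,t)\notin\mathcal{R}(V)\}$, arguing by contradiction that $T<\infty$. On $[0,T)$ the trajectory lies in $\mathcal{R}(V)$, so $V(\phi)<1$ and, away from the origin, \eqref{eq:decreasing} gives $\dot v<0$; hence $v$ is non-increasing and $v(t)\le v(0)=V(x_0)<1$ on $[0,T)$, and by continuity $V(\phi(x_0,T))<1$ as well. This rules out the escape mechanism ``$V$ reaches $1$'' and forces the exit point $x_T:=\phi(x_0,T)$ to lie on $\partial\mathcal{D}$ (using that $\mathcal{D}$ is open, so a first exit point is a boundary point). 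Since $V(x_T)<1$, the contrapositive of \eqref{eq:boundary} yields $f(x_T)\cdot\vec n(x_T)<0$: the flow is strictly inward at $x_T$. I would then turn this into a contradiction by looking at an active face of the box, say $\{x_i=1\}$ with outward normal $e_i$: the inward condition reads $\dot\phi_i(T)=f_i(x_T)<0$, whereas $\phi_i<1$ on $[0,T)$ and $\phi_i(T)=1$ force the one-sided derivative at $T$ to be $\ge 0$, a contradiction. Hence $T=\infty$ and $\mathcal{R}(V)$ is invariant; in particular $\phi(x_0,t)\in\mathcal{D}$ for all $t>0$. I expect this boundary step to be the main obstacle, precisely because it is where the nonstandard condition \eqref{eq:boundary} must be converted into a geometric statement about the flow, and where some care is needed at edges and corners of the box (a measure-zero set that I would handle facewise).

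For attractivity, the invariance just proved confines the trajectory to the bounded set $\mathcal{R}(V)$, so its $\omega$-limit set $\omega(x_0)\subset\overline{\mathcal{R}(V)}$ is nonempty, compact and invariant. Since $v$ is non-increasing and bounded below by $0$, it converges to some $c\ge 0$, and by continuity $V\equiv c$ on $\omega(x_0)$, whence $\dot V\equiv 0$ along any trajectory contained in $\omega(x_0)$. If $c>0$ I would derive a contradiction: an interior limit point $p$ has $V(p)=c\in(0,1)$, so \eqref{eq:positive}--\eqref{eq:decreasing} give $p\neq 0$ and $\dot V(p)<0$; a boundary limit point $p\in\partial\mathcal{D}$ with $V(p)=c<1$ has inward flow by \eqref{eq:boundary}, so the invariant trajectory through $p$ immediately enters the interior at a nonzero point where again $\dot V<0$, contradicting $V\equiv c$. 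Therefore $c=0$, and by \eqref{eq:definite}--\eqref{eq:positive} this forces $\omega(x_0)=\{0\}$, i.e. $\phi(x_0,t)\to 0$; this is essentially LaSalle's invariance principle specialized to our setting. Finally, Lyapunov stability follows from the standard positive-definiteness argument: given $\varepsilon>0$, pick $r\in(0,\varepsilon)$ with $\overline{B_r}\subset\mathcal{R}(V)$, set $\alpha=\min_{\|x\|=r}V(x)>0$ by \eqref{eq:positive}, and note that $\{x\in B_r: V(x)<\alpha\}$ is forward invariant by the same monotonicity of $v$; continuity of $V$ at $0$ then provides $\delta>0$ with $B_\delta\subset\{V<\alpha\}$, giving $\|x_0\|\le\delta\Rightarrow \|\phi(x_0,t)\|\le r\le\varepsilon$. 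Combining stability with attractivity yields asymptotic stability of the origin (claim \textit{i)}), and combining invariance with attractivity shows that $\mathcal{R}(V)$ is a basin of attraction in the sense of Definition~\ref{def:region-attraction} (claim \textit{ii)}); if the connectedness requirement of Definition~\ref{def:local-stability} is to be met literally, I would restrict to the connected component of $\mathcal{R}(V)$ containing the origin.
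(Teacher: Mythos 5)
Your proof is correct in substance, but note that the paper itself offers no proof of Theorem~\ref{thm:basin-attraction}: it is stated as a direct application of the Lyapunov direct method \cite[Theorem~4.1]{khalil2002nonlinear}, with condition~\eqref{eq:boundary} introduced precisely so that $\mathcal{R}(V)$ is invariant inside the bounded box. What you have done is supply the argument the paper leaves implicit, and you have correctly isolated the only genuinely nonstandard step: converting the contrapositive of \eqref{eq:boundary} into the impossibility of a first exit through $\partial\mathcal{D}$, via the one-sided derivative of the active coordinate. (Equivalently, and slightly more cleanly: at a first touching time every active face satisfies $f_i(x_T)\,\mathrm{sign}(x_{T,i})\ge 0$, so \eqref{eq:boundary} applies directly and forces $V(x_T)\ge 1$, contradicting the monotone limit $V(x_T)\le V(x_0)<1$; this avoids the contrapositive detour.) The remainder --- LaSalle on the precompact orbit for attractivity, and the classical sublevel-set argument for stability --- is standard and matches what the citation to Khalil is meant to cover. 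A few loose ends to tighten if you write this out in full: the paper's $V$ is only defined on the open set $\mathcal{D}$, so evaluating $V$ at a boundary exit point and asserting $V\equiv c$ on $\omega(x_0)$ both require a continuous extension of $V$ to $\overline{\mathcal{D}}$ (already implicit in \eqref{eq:boundary} itself); in the LaSalle step, a boundary $\omega$-limit point with $V=c=0$ is not excluded by \eqref{eq:positive}, which only gives positivity on $\mathcal{D}\setminus\{0\}$, but it is excluded by the same inward-flow argument you use for $c>0$; and the corner and connectedness caveats you flag are genuine mismatches between the theorem statement and Definitions~\ref{def:local-stability} and~\ref{def:lyapunov-function} rather than defects of your argument.
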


\smallskip

We can further use a Lyapunov function to characterize the region of attraction $\mathcal{R}^*$ for~\eqref{eq:dynamical_system}, that is, the largest basin of attraction. To this end, we employ Zubov's theorem \cite{zubov1961methods}, reviewed below.

\begin{theorem}[Lyapunov characterization of region of attr.]\label{thm:region-attraction}
    If there exists a positive definite function $\phi: \mathcal{D} \to \mathbb{R}$ on the closure of $\mathcal{D}$ such that $V^*$ verifies equations~\eqref{eq:lyapunov_definition} and
    \begin{equation} \label{eq:zubov_equality} 
        \frac{\partial V^*}{\partial x}(x) \cdot f(x) = - \phi(x) \left( 1 - V^*(x) \right),
    \end{equation}
    then $\mathcal{R}(V^*) = \mathcal{R}^*$ and $V^*$ is called a \textbf{maximal Lyapunov function}.
\end{theorem}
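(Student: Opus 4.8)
The plan is to establish the two set inclusions $\mathcal{R}(V^*) \subseteq \mathcal{R}^*$ and $\mathcal{R}^* \subseteq \mathcal{R}(V^*)$ separately. The first is essentially free, while the second carries all the content and is obtained by integrating the Zubov equation~\eqref{eq:zubov_equality} along the trajectories of~\eqref{eq:dynamical_system}.

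For the easy inclusion, I would observe that $V^*$ satisfies~\eqref{eq:lyapunov_definition} by hypothesis, so it is a local Lyapunov function in the sense of Definition~\ref{def:lyapunov-function}. (Note that positive definiteness of $\phi$ is exactly what makes~\eqref{eq:decreasing} consistent with~\eqref{eq:zubov_equality}, since on $\{V^* < 1\}$ the factor $1 - V^* > 0$.) Theorem~\ref{thm:basin-attraction} then guarantees that $\mathcal{R}(V^*) = \{x \in \mathcal{D} : V^*(x) < 1\}$ is a basin of attraction. Since every basin of attraction is contained in $\mathcal{R}^*$ --- the largest one, by Definition~\ref{def:region-attraction} --- we immediately get $\mathcal{R}(V^*) \subseteq \mathcal{R}^*$.

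For the reverse inclusion, I would fix $x_0 \in \mathcal{R}^*$ and write $x(t) := \phi(x_0, t)$; by definition of $\mathcal{R}^*$ this trajectory remains in $\mathcal{D}$ for all $t > 0$ and satisfies $x(t) \to 0$. Setting $h(t) := 1 - V^*(x(t))$, differentiating and substituting~\eqref{eq:zubov_equality} yields the scalar linear ODE
\begin{equation*}
\dot h(t) = - \frac{\partial V^*}{\partial x}(x(t)) \cdot f(x(t)) = \phi(x(t))\, h(t),
\end{equation*}
whose solution is $h(t) = h(0) \exp\!\left( \int_0^t \phi(x(s))\,ds \right)$. The crucial observation is that the exponential factor is strictly positive, so the sign of $h(t)$ --- equivalently, whether $V^*(x(t))$ lies below, on, or above the level $1$ --- is preserved along the whole trajectory.

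Finally, I would close the argument by a continuity/limit step: since $x(t) \to 0$ and $V^*$ is continuous with $V^*(0) = 0$ by~\eqref{eq:definite}, we have $V^*(x(t)) \to 0$, hence $h(t) \to 1 > 0$. Because the sign of $h$ is constant, this forces $h(0) > 0$, i.e. $V^*(x_0) < 1$, so $x_0 \in \mathcal{R}(V^*)$. This proves $\mathcal{R}^* \subseteq \mathcal{R}(V^*)$ and therefore the claimed equality. I expect the main obstacle to be less a hard estimate than the bookkeeping that keeps everything well defined: the Zubov equation can only be integrated where $V^*$, $f$ and $\phi$ are evaluated inside $\mathcal{D}$, which is precisely guaranteed by restricting attention to $x_0 \in \mathcal{R}^*$, whose forward trajectory stays in $\mathcal{D}$. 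The sign-invariance of $h$ is then the key mechanism that upgrades convergence to the origin into the sublevel condition $V^* < 1$.
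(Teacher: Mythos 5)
Your argument is correct, but there is nothing in the paper to compare it against: Theorem~\ref{thm:region-attraction} is quoted as Zubov's theorem with a citation to \cite{zubov1961methods}, and the paper only offers an informal phase-plane interpretation (Fig.~\ref{fig:phase_plane}); no proof is given. What you have written is essentially the classical argument, and it is sound. The inclusion $\mathcal{R}(V^*)\subseteq\mathcal{R}^*$ follows from Theorem~\ref{thm:basin-attraction} as you say, the only implicit step being that forward invariance of $\{V^*<1\}$ inside $\mathcal{D}$ (coming from \eqref{eq:decreasing}--\eqref{eq:boundary}) is what places $\mathcal{R}(V^*)$ inside the explicit set defining $\mathcal{R}^*$ in Definition~\ref{def:region-attraction}. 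For the converse, the reduction of \eqref{eq:zubov_equality} to the scalar linear ODE $\dot h=\phi(x(t))\,h$ with $h=1-V^*\circ x$ and the closed-form solution $h(t)=h(0)\exp\left(\int_0^t\phi(x(s))\,ds\right)$ is legitimate precisely because $x_0\in\mathcal{R}^*$ keeps the trajectory in $\mathcal{D}$, where $V^*$ is $C^1$ and $\phi$ is continuous, so the integrand is continuous and the exponential factor is finite and strictly positive for every finite $t$. The sign rigidity of $h$ --- including the degenerate case $h(0)=0$, which forces $h\equiv 0$ and is worth stating explicitly --- is then incompatible with $h(t)\to 1-V^*(0)=1$ unless $h(0)>0$, i.e.\ $V^*(x_0)<1$. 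This is complete at the level of rigor the paper operates at; if anything, your derivation makes explicit the mechanism (sign invariance along trajectories) that the paper's phase-plane discussion only gestures at, which is a useful addition rather than a deviation.
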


\subsection{Problem formulation}\label{subsec:problem-formulation}
In the previous section, we have reviewed how the region of attraction for~\eqref{eq:dynamical_system} can be characterized in terms of a maximal Lyapunov function (Theorem~\ref{thm:region-attraction}).
However, computing a maximal Lyapunov function is itself a complex task, which we address in sections~\ref{sec:neural-lyapunov}--\ref{sec:supervised} by leveraging physics-informed learning.
But before formally stating the problem we address, we introduce the following assumption.

\begin{assumption} \label{ass:linearization}
    $f$ in~\eqref{eq:dynamical_system} can be written as $f(x) = Ax + o( \| x \|)$ such that $A$ has all eigenvalues with strictly negative real part.
\end{assumption}

We make Assumption~\ref{ass:linearization} because it provides a sufficient condition for the existence of a Lyapunov function; see also the additional discussion in section~\ref{subsec:remarks}. In particular, by the Lyapunov indirect method \cite[Theorem 12.6]{glad2018control} we know that there exists a quadratic local Lyapunov function $V \in \mathcal{C}^{\infty}(\mathcal{D}, \mathbb{R}^+)$ for \eqref{eq:dynamical_system}. This, in turn, ensures the existence of a basin of attraction around the origin, making the following problem well-posed.

\smallskip

\begin{problem}
    Find a $C^{\infty}$ approximation of a maximal Lyapunov function $V^*$ for~\eqref{eq:dynamical_system}, and use it to estimate the region of attraction $\mathcal{R}^*$.
\end{problem}

\subsection{Discussion}\label{subsec:remarks}

In this section, we discuss the problem set-up and formulation in more detail, in particular the assumptions we make.

\subsubsection{Bounded domain $\mathcal{D} = (-1, 1)^n$}
We start by discussing the assumption that the state of~\eqref{eq:dynamical_system} evolves within the bounded set $\mathcal{D} = (-1, 1)^n$. We introduce this assumption to make the problem formulated in section~\ref{subsec:problem-formulation} more suitable for the application of physics-informed learning. Indeed, we argue that this assumption is not restrictive in general, as any open hypercube can be reshifted and rescaled to $\mathcal{D}$.
Therefore, when given a general dynamical system, we can apply the following procedure: 1) we remap its domain onto $\mathcal{D}$, 2) then we apply the proposed Lyapunov function learning method and use it to estimate the region of attraction (see sections~\ref{sec:neural-lyapunov}--\ref{sec:supervised}), and finally 3) map the learned region of attraction in the original coordinates (by reshifting and rescaling).
However, we would like to point out that, while this is always possible in theory, in practice, remapping to $\mathcal{D}$ might give rise to some issues. In particular, the finite numerical precision of the computer might be a source of imprecision when mapping very large domains onto $\mathcal{D}$. Additionally, this operation might require significant computational power. Therefore, we highlight that relaxing the assumption of $x_0 \in \mathcal{D} = (-1, 1)^n$ is an interesting direction for future research.

Another issue might arise when dealing with dynamical systems that are globally asymptotically stable (hence, whose region of attraction coincides with $\mathbb{R}^n$).
In this context, one can map $\mathbb{R}^n$ onto $\mathcal{D}$ by the coordinate-wise application of the non-linear transformation $\tanh(y) = \frac{e^{2y} - 1}{e^{2y} + 1}$, $y \in \mathbb{R}$.
However, global asymptotical stability requires radial unboundedness of the Lyapunov function, \textit{i.e.} $V(x) \to \infty$ when $\| x\| \to \infty$ \cite{khalil2002nonlinear}. Thus, when mapping to $\mathcal{D}$, the Lyapunov function we seek should guarantee that $V(x) \to \infty$ when $ x \to \partial \mathcal{D}$.
To solve this issue, we can follow the result of \cite{vannelli1985maximal}, according to which, if $V^*$ is a maximal Lyapunov function, then 
\[
    V_m(x)= -\log(1 - V^*(x))
\]
is a Lyapunov function and $V_m(\mathcal{D} \backslash{} \mathcal{R}(V)) = \infty$. Consequently, given the maximal Lyapunov function $V^*$ on $\mathcal{D}$, the radial unboundedness of $V_m$ is guaranteed by 
\[
    \forall x \in \partial \mathcal{D}, \quad \quad V^*(x) = 1.
\]
This condition can be added to our proposed approach, but for simplicity, we do not discuss this point further in this paper.

\subsubsection{Characterization of basins of attraction}
We remark that the characterization of basins of attraction based on Lyapunov functions, see Theorem~\ref{thm:basin-attraction}, differs slightly from classical ones. Indeed, in \textit{e.g.} \cite[Section~12.2]{glad2018control} or \cite[Section~3.1]{khalil2002nonlinear}, a basin of attraction is defined as the set where $\mathcal{R}_d(V) = \{ x \in \mathcal{D} \ | \ V(x) < d \}$.
However, we choose the characterization of Theorem~\ref{thm:basin-attraction} as it decreases the complexity of the Lyapunov function computation. This is accomplished by scaling the functions so that $V(x) = d = 1$ at the boundary $x \in \partial \mathcal{D}$.

\subsubsection{Assumption~\ref{ass:linearization}}
Regarding Assumption~\ref{ass:linearization}, it is not very restrictive. Indeed, any continuous function $f$ differentiable at the origin admits such a decomposition \cite[Theorem~5.1]{coleman2012calculus}. The constant term can be removed by an appropriate change of variable such that the origin becomes an equilibrium point for \eqref{eq:dynamical_system}. If at least one eigenvalue of $A$ has a strictly positive real part, then the equilibrium point is not asymptotically stable \cite[Theorem~12.2]{glad2018control}. However, if there is an eigenvalue on the imaginary axis, the equilibrium point might still be asymptotically stable \cite[Example~12.1]{glad2018control}. In this article, we do not deal with these corner cases.

\subsubsection{Interpretation of Theorem~\ref{thm:region-attraction}}
\begin{figure}[!ht]
    \centering
    \includegraphics[width=0.95\linewidth]{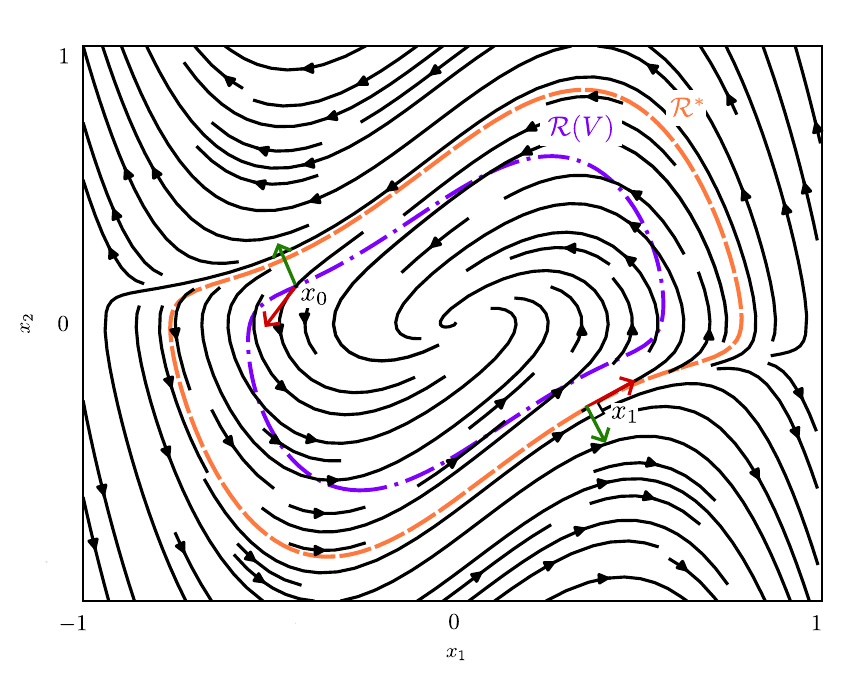}
    \caption{Phase-plane interpretation of Zubov's theorem. The point $x_0$ is on the boundary of a suboptimal basin of attraction with Lyapunov function $V$. The flow (red arrow) is entering the level set. Along the region of attraction, at point $x_1$, the flow (red arrow) is perpendicular to the gradient (green arrow).}
    \label{fig:phase_plane}
\end{figure}

We conclude this section by briefly discussing Theorem~\ref{thm:region-attraction}, to lay the foundation for the developments reported in sections~\ref{sec:unsupervised} and~\ref{sec:supervised}.
In particular, let us consider the example of a system~\eqref{eq:dynamical_system} with two states; we can then interpret Theorem~\ref{thm:region-attraction} in the phase plane (Fig.~\ref{fig:phase_plane}).
We start by observing that the level sets of a Lyapunov function $V$ are forward invariant sets. This means that taking an initial condition $x_0 \in \mathcal{D}$ such that $V(x_0) \leq \bar{V}$, then, by~\eqref{eq:decreasing}, the value of the Lyapunov function will decrease along the trajectory, \textit{i.e.} $V(\phi(x_0, t)) \leq \bar{V}$ $\forall t$. The basin of attraction $\mathcal{R}(V)$, by the characterization in Theorem~\ref{thm:basin-attraction}, is forward invariant.
This example of basin is represented by the purple line in Fig.~\ref{fig:phase_plane}; and consider an initial condition $x_0 \in \partial \mathcal{R}(V)$ on its boundary such that $\frac{\partial V}{\partial x}(x_0) \cdot f(x_0) \neq 0$. Then by~\eqref{eq:decreasing} necessarily $\frac{\partial V}{\partial x}(x_0) \cdot f(x_0) < 0$, meaning that the flow is pointing towards the interior of the basin of attraction.
This implies that there is a neighborhood of $x_0$ such that trajectories initialized therein will enter $\mathcal{R}(V)$ at some time. In other words, $\mathcal{R}(V)$ is not maximal and it can be enlarged by incorporating the points in this neighborhood (see $x_0$ in Fig.~\ref{fig:phase_plane}).
Let us now consider a second basin $\mathcal{R}(V)$ (orange line in Fig.~\ref{fig:phase_plane}), and pick an initial condition $x_1 \in \partial \mathcal{R}(V)$. The basin cannot be expanded beyond $x_1$ if the flow $f(x)$ is orthogonal to the gradient of $V$ at $x_1$. This implies that the flow is tangent to the surface of $\mathcal{R}(V)$, and therefore equation~\eqref{eq:zubov_equality} holds and $\mathcal{R}(V)$ is maximal, $\mathcal{R}(V) = \mathcal{R}^*$.

\section{Taylor-Neural Lyapunov functions}\label{sec:neural-lyapunov}
Theorem~\ref{thm:region-attraction} in the previous section reviewed how to characterize the region of attraction through a maximal Lyapunov function. However, this is a clearly challenging problem to solve in general, since finding a maximal Lyapunov function $V^*$ amounts to solving equations~\eqref{eq:lyapunov_definition} together with
\[
    \left. \frac{\partial V}{\partial x} \cdot f \right|_{\partial \mathcal{R}(V)} = 0,
\]
which constitutes a system of equations that is generally numerically intractable.
In the following, we seek to simplify the problem by restricting the search space for the maximal Lyapunov function. To this end, in section~\ref{subsec:architecture-design} we propose the novel class of \textit{Taylor-neural Lyapunov functions}, and show in section~\ref{subsec:universal-approximator} that they serve as universal approximators of maximal Lyapunov functions.

\subsection{Architecture design}\label{subsec:architecture-design}
We start designing the proposed architecture by leveraging the Taylor expansion, similarly to what is done in robust control theory, where the problem is relaxed by considering quadratic Lyapunov functions \cite{boyd1994linear}.
In particular, let $V^*$ be a maximal Lyapunov function for~\eqref{eq:dynamical_system} under Assumption~\ref{ass:linearization}; additionally, let the following assumption hold.
\begin{assumption}
    $V^* \in C^3(\mathcal{D}, \mathbb{R})$.
\end{assumption}
Then, we can apply the Taylor expansion in several variables \cite[Theorem 5.4]{coleman2012calculus} around the origin to approximate $V^*$ at any $x \in \mathcal{D}$ by:
\begin{equation} \label{eq:V*}
\begin{split}
    &V^*(x) = V^*(0) + \nabla V^*(0) \cdot x + \frac{1}{2} x^{\top} H^* x + \\ &+ \sum_{\substack{i_1 + \cdot + i_n = 3, \\ i_k \geq 0}} \underbrace{\int_0^1 \frac{(1-t)^2}{2} \frac{\partial^3 V^*}{\partial x_1^{i_1} \cdots \partial x_n^{i_n}}(t \cdot x) dt}_{R_{i_1,\dots,i_n}(x)} \prod_{k=1}^{n} x_k^{i_k},
\end{split}
\end{equation}
where $\nabla V^*$ is the gradient of $V^*$, $H^*$ denotes the Hessian of $V^*$ evaluated at $0$, and the $n^3$ residuals $R_{i_1,\dots,i_n}$ belong to $C^3(\mathcal{D}, \mathbb{R})$.

\smallskip

We continue our architecture design by leveraging the following lemma (founded on the indirect Lyapunov method) to simplify~\eqref{eq:V*}.

\begin{lemma}[$V^*$ around the origin]\label{lem:lemma1}
    Let $V^*$ be a maximal Lyapunov function for~\eqref{eq:dynamical_system}; under Assumption~\ref{ass:linearization} we have that:
    \begin{enumerate}
        \item $V^*(0) = \nabla V^*(0) = 0$ and $H^*$ is a symmetric positive matrix.
        \item $A^{\top} H^* + H^* A \preceq 0$.
    \end{enumerate}
\end{lemma}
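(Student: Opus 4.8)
The plan is to prove both claims by combining the defining properties of the maximal Lyapunov function in Definition~\ref{def:lyapunov-function} with the Taylor expansion~\eqref{eq:V*} and the linearization structure of Assumption~\ref{ass:linearization}. The common engine for both parts will be a localization argument: substitute $x = \varepsilon u$ for a fixed unit vector $u$ into the relevant inequality, divide by $\varepsilon^2$, and let $\varepsilon \to 0^+$ so that the higher-order remainders drop out and only the quadratic leading order survives.

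For the first claim, $V^*(0) = 0$ is immediate from~\eqref{eq:definite}. To obtain $\nabla V^*(0) = 0$, I would observe that~\eqref{eq:definite}--\eqref{eq:positive} make the origin an interior strict local minimizer of the differentiable function $V^*$, so the first-order optimality condition forces the gradient to vanish there. The symmetry of $H^*$ follows from the equality of mixed partials (Schwarz's theorem), which applies since $V^* \in C^3(\mathcal{D},\mathbb{R})$. For the sign of $H^*$, I would feed $\nabla V^*(0) = 0$ back into~\eqref{eq:V*}, noting that the cubic block is $O(\|x\|^3) = o(\|x\|^2)$ because the residuals $R_{i_1,\dots,i_n}$ are continuous (hence locally bounded) and multiply degree-three monomials; this gives $V^*(x) = \tfrac12 x^\top H^* x + o(\|x\|^2)$. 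Applying the localization argument to~\eqref{eq:positive} then leaves $u^\top H^* u \ge 0$ for every unit $u$, i.e. $H^* \succeq 0$.

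For the second claim, the key identity is the leading-order behavior of the Lyapunov decay rate. Using $\nabla V^*(0)=0$ and $V^* \in C^3$, the first-order Taylor expansion of the gradient reads $\nabla V^*(x) = H^* x + o(\|x\|)$, while Assumption~\ref{ass:linearization} gives $f(x) = Ax + o(\|x\|)$. Taking the inner product and exploiting the symmetry of $H^*$ yields
\begin{equation*}
    \nabla V^*(x) \cdot f(x) = \tfrac12\, x^\top \big( A^\top H^* + H^* A \big)\, x + o(\|x\|^2).
\end{equation*}
I would then apply the same localization argument to the decay condition~\eqref{eq:decreasing}, which is active on a punctured neighborhood of the origin since $V^*(x) < 1$ there, concluding that $u^\top (A^\top H^* + H^* A)\, u \le 0$ for every unit $u$, that is $A^\top H^* + H^* A \preceq 0$.

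The main obstacle is the passage from the \emph{strict} pointwise inequalities~\eqref{eq:positive} and~\eqref{eq:decreasing} to \emph{non-strict} matrix inequalities. Because the $o(\|x\|^2)$ remainders can absorb the strict sign along directions lying in a kernel of the quadratic form, the limiting argument returns only semidefiniteness rather than definiteness; this is exactly what the statement asserts, and it cannot be strengthened in general (for instance $V^*(x)=x^4$ is a valid maximal Lyapunov function for $\dot x = -x$ with $H^* = 0$). The remaining care is bookkeeping: verifying that the residual terms in~\eqref{eq:V*} are genuinely $o(1)$ as $x \to 0$ so that the cubic block does not contaminate the quadratic leading order, which is where the $C^3$ regularity of $V^*$ and of the residuals is used.
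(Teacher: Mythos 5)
Your proposal is correct and follows essentially the same route as the paper's proof: establish $V^*(0)=0$ and $\nabla V^*(0)=0$ from \eqref{eq:definite}--\eqref{eq:positive}, use the $C^3$ Taylor expansion~\eqref{eq:V*} to isolate the quadratic leading order, and evaluate \eqref{eq:positive} and \eqref{eq:decreasing} in a shrinking neighborhood of the origin to extract $H^* \succeq 0$ and $A^{\top}H^* + H^*A \preceq 0$. Your version merely spells out the steps the paper leaves implicit (first-order optimality for the vanishing gradient, Schwarz's theorem for symmetry, and the $x=\varepsilon u$ localization that explains why only the non-strict inequalities survive the limit), so no further comparison is needed.
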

\begin{proof}
    We prove the two points as follows.
    \begin{enumerate}
        \item Since $V^* \in C^3(\mathcal{D}, \mathbb{R})$, $H^*$ is a symmetric matrix, and $R_{i_1,\dots,i_n}$ are bounded (because continuous on a bounded set).
        From~\eqref{eq:definite}, we get that $V^*(0) = 0$. Evaluated in a neighborhood of the origin, equation~\eqref{eq:positive} implies $\nabla V^*(0) = 0$.  Equations~\eqref{eq:definite} and \eqref{eq:positive} lead to the positiveness of $H^*$.
        \item Differentiate \eqref{eq:V*} and use 1), we get:
        \[
            \frac{\partial V^*}{\partial x}(x) =  H^* x + o(\|x\|).
        \]
        Consequently, the time derivative along the trajectories of \eqref{eq:dynamical_system} leads to:
        \begin{equation*}
            \frac{\partial V^*}{\partial x}(x) \cdot f(x) = \frac{1}{2} x^{\top} \left( A^{\top} H^* + H^* A \right) x + o(\|x\|^2).
        \end{equation*}
        Equation~\eqref{eq:decreasing} evaluated in a neighborhood of the origin implies $A^{\top} H^* + H^* A \preceq 0$.
    \end{enumerate}
\end{proof}

Applying Lemma~\ref{lem:lemma1} to~\eqref{eq:V*} we can then simplify it to:
\begin{equation} \label{eq:Vstar_extended}
    V^*(x) = \frac{1}{2} x^{\top} H^* x + \sum_{\substack{i_1 + \cdot + i_n = 3, \\ i_k \geq 0}} R_{i_1,\dots,i_n}(x) \prod_{k=1}^{n} x_k^{i_k}.
\end{equation}
This Taylor approximation of a maximal Lyapunov function $V^*$ serves as the blueprint for the proposed architecture presented in Definition~\ref{def:taylor-neural-lyapunov}. But before showing the architecture, we introduce the notion of neural network residuals, which act as approximators of the residuals $R_{i_1,\dots,i_n}$ in~\eqref{eq:V*}.

\begin{definition}[Neural network residual]\label{def:neural-residual}
    We define the \textbf{neural network residual} $\hat{R}_N$, with $N > 0$ neurons per layer, as
    \begin{equation} \label{eq:nn_residual}
        \hat{R}_N(x) = W_{N_l+1} H_{W_{N_l},b_{N_l}} \circ \cdots \circ H_{W_{0},b_{0}}(x) + b_{N_l+1}
    \end{equation}
    where $N_l \in \mathbb{N} \backslash \{0\}$ is the number of hidden layers; where we denote by $W_0 \in \mathbb{R}^{N \times n}$, $W_i \in \mathbb{R}^{N \times N}$, $W_{N_l+1} \in \mathbb{R}^{n^3 \times N}$ the \textbf{weights}, by $b_i \in \mathbb{R}^N$ the \textbf{biases}, by $\psi \in C^{\infty}(\mathbb{R}, \mathbb{R})$ the \textbf{activation function}, which we assume bounded up to order $3$; and where
    \[
        H_{W,b}(x) = \psi.(Wx + b).
    \]
\end{definition}

\begin{remark}
    We choose an activation function $\psi$ bounded up to order $3$ in accordance with \cite{hornik1991approximation}. And we remark that this choice does not restrict the approximation capabilities of the Taylor-neural Lyapunov function proposed in Definition~\ref{def:taylor-neural-lyapunov}, which are characterized in Theorem~\ref{prop:lyapunov}.
\end{remark}

\smallskip

In the following we denote by $\Theta = \left\{ (W_i, b_i) \right\}_{i = 0, \dots, N_l+1}$ the tensor collecting all the parameters (weights and biases) of the neural network residual.

We are now ready to define the proposed architecture.

\begin{definition}[Taylor-neural Lyapunov function]\label{def:taylor-neural-lyapunov}
    The proposed \textbf{Taylor-neural Lyapunov function} is defined as
    \begin{equation}
        \label{eq:VhatFull}
        \hat{V}(x) = \frac{1}{2} x^{\top} P x + \sum_{i \in \mathcal{I}} \underbrace{\left\langle \hat{R}_N(x), e_i \right\rangle}_{\hat{R}_N^{(i)}(x)} \prod_{k=1}^{n} x_k^{i_k}
    \end{equation}
    where $P \in \mathbb{R}^{n \times n}$ is a positive definite matrix such that $A^{\top}P + PA \prec 0$; where $\mathcal{I} = \left\{ (i_1, \dots, i_n) \in \{0, 1, 2, 3\}^{n} \ | \ \sum_{k=1}^n i_k = 3 \right\}$ is the index set for the residuals; and where $\{e_i\}_{i \in \mathcal{I}}$ is a given basis of $\mathbb{R}^{n^3}$, which is used to select the outputs of the neural network residual $\hat{R}_N$ introduced in Definition~\ref{def:neural-residual}.
\end{definition}

\smallskip

The following section shows that the proposed architecture is a universal approximator for Lyapunov functions. That is, choosing $N$ large enough, we can always learn $P$ and $\hat{R}_N$ so that $\hat{V}$ approximates $V^*$ arbitrarily close.

\subsection{Universal approximation theorem}\label{subsec:universal-approximator}
The universal approximation theorem stated below relies on the following result for maximal Lyapunov functions and the region of attraction they characterize.

\begin{lemma}[Approximating the region of attraction]\label{lem:smoothness}
    Let $V^*$ be a maximal Lyapunov function. Then there exists a Lyapunov function $V \in C^3(\mathcal{D}, \mathcal{R})$ with a definite positive Hessian such that, for any $\varepsilon \in (0, 1)$, the following inclusion holds:
    \begin{equation} \label{eq:smoothness}
        \left\{ x \in \mathcal{D} \ | \ V^*(x) \leq 1 - \varepsilon \right\} \subseteq \mathcal{R}(V) \subseteq \mathcal{R}(V^*).
    \end{equation}
\end{lemma}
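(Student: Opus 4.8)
The plan is to start from the given $V^*$---which, by Assumption~2, is already $C^3$ and, by Lemma~\ref{lem:lemma1}, satisfies $V^*(0)=\nabla V^*(0)=0$ with Hessian $H^*\succeq 0$---and repair the single defect that prevents us from reading off the conclusion directly, namely that $H^*$ may be only positive \emph{semi}definite. I would fix this by adding a small, spatially localized positive-definite quadratic. Concretely, since $A$ is Hurwitz (Assumption~\ref{ass:linearization}), pick $P\succ 0$ solving $A^\top P+PA=-Q$ with $Q\succ 0$, set $g(x)=\tfrac12 x^\top Px$, and choose a cutoff $\chi\in C^\infty(\mathcal{D},[0,1])$ with $\chi\equiv 1$ on a ball $B_{r/2}$ and $\mathrm{supp}\,\chi\subset B_r$, where $r$ is small enough that $\overline{B_r}\subset\mathcal{R}^*$. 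Define
\begin{equation}
  V(x)=V^*(x)+\lambda\,\chi(x)\,g(x),
\end{equation}
with $\lambda>0$ a parameter to be fixed. The whole argument then reduces to checking that, for suitable $r$ and $\lambda$, $V$ is a Lyapunov function in the sense of Definition~\ref{def:lyapunov-function} with positive-definite Hessian and with $\mathcal{R}(V)=\mathcal{R}^*$.

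Most of the verifications I expect to be immediate. $V\in C^3$ as a sum and product of $C^3$ (and $C^\infty$) maps; $V>0$ on $\mathcal{D}\setminus\{0\}$ because $V^*>0$ there and $\lambda\chi g\ge 0$; and, since $\chi$ is supported in $B_r$ with $\overline{B_r}\subset\mathcal{R}^*\subset\mathcal{D}$, we have $V=V^*$ both on $\partial\mathcal{D}$ (so~\eqref{eq:boundary} is inherited from $V^*$) and on the entire level set $\{V^*=1\}$. The latter gives $\{V<1\}=\{V^*<1\}=\mathcal{R}^*$, hence $\mathcal{R}(V)=\mathcal{R}^*=\mathcal{R}(V^*)$; combined with $\{V^*\le1-\varepsilon\}\subset\mathcal{R}^*$ this yields the claimed inclusions~\eqref{eq:smoothness} for every $\varepsilon\in(0,1)$. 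For the Hessian, near the origin $\chi\equiv1$, so $\nabla^2V(0)=H^*+\lambda P$, which is positive definite because $H^*\succeq0$ and $\lambda P\succ0$.

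The only substantive point---and the expected main obstacle---is the decrease condition~\eqref{eq:decreasing}, i.e.\ $\nabla V\cdot f<0$ on $\{V<1\}\setminus\{0\}=\mathcal{R}^*\setminus\{0\}$, which I would establish by splitting $\mathcal{R}^*\setminus\{0\}$ into a neighbourhood of the origin and its complement. Expanding as in the proof of Lemma~\ref{lem:lemma1} gives $\nabla V^*\cdot f=-\tfrac12x^\top Q^*x+o(\|x\|^2)$ with $Q^*=-(A^\top H^*+H^*A)\succeq0$, while $\lambda\nabla g\cdot f=\tfrac{\lambda}{2}x^\top(A^\top P+PA)x+o(\|x\|^2)=-\tfrac{\lambda}{2}x^\top Qx+o(\|x\|^2)$ and $\nabla\chi\equiv0$ near $0$; summing yields $\nabla V\cdot f=-\tfrac12x^\top(Q^*+\lambda Q)x+o(\|x\|^2)<0$ on a punctured ball $B_{r'}\setminus\{0\}$, since $Q^*+\lambda Q\succ0$. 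On the complementary compact set $\overline{B_r}\setminus B_{r'}$, condition~\eqref{eq:decreasing} for $V^*$ provides a uniform margin $\nabla V^*\cdot f\le-c<0$, and the perturbation terms $\lambda\big[(\nabla\chi\cdot f)g+\chi(\nabla g\cdot f)\big]$ are $O(\lambda)$ with constants depending only on $r$, hence dominated once $\lambda$ is taken small. The delicate bookkeeping is the order of the quantifiers: I would first fix $r$ (and thus $r'$ and the cutoff) to validate the near-origin estimate and to place $B_r$ strictly inside $\mathcal{R}^*$, and only then shrink $\lambda$ to preserve both the decrease on the annulus and the positive definiteness of $H^*+\lambda P$. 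This localization through $\chi$ is precisely what avoids the genuinely delicate balance one would face with a global perturbation $V^*+\lambda g$, where near the perturbed basin boundary the decrease margin of $V^*$ itself degenerates to order $\lambda$ and competes directly with the added term.
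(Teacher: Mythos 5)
Your proof is correct, but it takes a genuinely different route from the paper's. The paper perturbs \emph{globally}: when the Hessian of $V^*$ at the origin is degenerate, it sets $V = V^* + \alpha V_q$ with $V_q(x)=x^{\top}Px$ a local quadratic Lyapunov function, and then chooses $\alpha$ small \emph{depending on $\varepsilon$} so that the shrunken basin $\{V^*+\alpha V_q<1\}$ still contains $\{V^*\le 1-\varepsilon\}$. Your compactly supported perturbation $V=V^*+\lambda\chi g$ with $\mathrm{supp}\,\chi\subset B_r\subset\subset\mathcal{R}^*$ buys two things the paper's argument does not: (i) a single $V$ that works for \emph{all} $\varepsilon\in(0,1)$ simultaneously (which is the quantifier order actually asserted in the lemma, $\exists V\,\forall\varepsilon$, whereas the paper's proof only delivers $\forall\varepsilon\,\exists V$), since $V\equiv V^*$ off $B_r$ forces $\mathcal{R}(V)=\mathcal{R}(V^*)$; and (ii) it sidesteps the delicate issue you correctly identify with the global perturbation, namely that near $\{V^*=1\}$ the decrease margin $-\nabla V^*\cdot f=\phi(x)(1-V^*(x))$ (by~\eqref{eq:zubov_equality}) degenerates and must be balanced against $\alpha\nabla V_q\cdot f$, which can have either sign far from the origin --- a point the paper's proof passes over in silence. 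The price of your approach is the extra bookkeeping with the cutoff; the one place your writeup is slightly too quick is the claim that $V=V^*$ on $\{V^*=1\}$ alone gives $\{V<1\}=\{V^*<1\}$: you also need $V<1$ on $\overline{B_r}$, which follows from $\sup_{\overline{B_r}}V^*<1$ (compactness of $\overline{B_r}$ inside the open set $\mathcal{R}^*$) once $\lambda$ is small --- consistent with your final shrinking of $\lambda$, so this is a presentational gap rather than a mathematical one.
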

\vspace*{0.2cm}
\begin{proof}
    If there exists a maximal Lyapunov function $V^*$ such that its Hessian at $0$ is definite positive, then the set inclusion holds with $V = V^*$. If the Hessian is zero at $0$ then from Assumption~\ref{ass:linearization} and Lyapunov indirect method, there exists $P \succ 0$ such that $V_q(x) = x^{\top} P x$ is a local Lyapunov function for \eqref{eq:dynamical_system}. Consequently, for any $\alpha > 0$, $V = V^* + \alpha V$ is a local Lyapunov function. For any $\varepsilon$, it is possible to pick $\alpha$ small enough such that the inclusion holds.
\end{proof}

\smallskip

Lemma~\ref{lem:smoothness} shows that we can always find a Lyapunov function $V$ whose basin of attraction is arbitrarily close to the region of attraction, which we recall is defined as $\mathcal{R}(V^*) = \left\{ x \in \mathcal{D} \ | \ V^*(x) \leq 1 - \varepsilon \right\}$.
We can then leverage this result to show that we can always train a suitable Taylor-neural Lyapunov function $\hat{V}$ whose basin of attraction approximates $\mathcal{R}(V^*)$ arbitrarily close.
In other words, the following theorem shows that the architecture proposed in Definition~\ref{def:taylor-neural-lyapunov} is indeed a universal approximator for maximal Lyapunov functions.

\begin{theorem}[Universal approximation]\label{prop:lyapunov}
    Let $V^*$ be a maximal Lyapunov function. For any $\varepsilon \in (0, 1)$, there exist $N > 0$, $P \succ 0$ and $\Theta$ such that $\hat{V}$ is a Lyapunov function and 
    \begin{equation} \label{eq:set_inclusion}
        \left\{ x \in \mathcal{D} \ | \ V^*(x) \leq 1 - \varepsilon \right\} \subseteq \mathcal{R}(\hat{V}) \subseteq \mathcal{R}(V^*).
    \end{equation}
\end{theorem}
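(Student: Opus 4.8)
The plan is to reduce the statement to a quantitative $C^1$-approximation problem to which Hornik's universal approximation theorem applies, after first engineering a target Lyapunov function whose basin sits robustly inside the region where the flow is strictly transverse to the level sets. Two features of the architecture in Definition~\ref{def:taylor-neural-lyapunov} make this convenient: its Hessian at the origin is exactly $P$ (the residual terms vanish to third order), so the local behavior is governed entirely by $P$; and on the bounded domain the degree-three monomials are uniformly bounded in $C^1$, so $C^1$-closeness of the network residuals transfers to $C^1$-closeness of $\hat V$ to its target.

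First I would fix the target. Rather than approximating $V^*$ directly, I would work with $W = c\,V^* + \alpha\,V_q$, where $c = 1/(1-\varepsilon/2) > 1$, $V_q(x) = x^\top P_0 x$ with $P_0 \succ 0$ solving $A^\top P_0 + P_0 A = -Q \prec 0$ (which exists since $A$ is Hurwitz), and $\alpha > 0$ is small. The scaling by $c$ is the key device: it makes $\mathcal{R}(W) = \{V^* < 1-\varepsilon/2\}$, so that the whole basin of $W$ (and, after approximation, of $\hat V$) stays inside $\{V^* \le 1-\varepsilon/4\}$, a compact subset of $\mathcal{R}^*$ on which Zubov's identity~\eqref{eq:zubov_equality} gives $\nabla V^* \cdot f = -\phi\,(1-V^*) \le -\eta < 0$ away from the origin. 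The term $\alpha V_q$ (in the spirit of Lemma~\ref{lem:smoothness}) upgrades the Hessian $H_W = cH^* + 2\alpha P_0$ to be strictly positive definite and, using Lemma~\ref{lem:lemma1}, yields the strict inequality $A^\top H_W + H_W A = c(A^\top H^* + H^* A) + 2\alpha(A^\top P_0 + P_0 A) \prec 0$. One then sets $P = H_W$ and checks that $W$ is itself a Lyapunov function in the sense of Definition~\ref{def:lyapunov-function}; the scaling also gives margin $W \ge c > 1$ at the relevant boundary points, which will later absorb the approximation error in~\eqref{eq:boundary}.

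Next I would instantiate the architecture. Expanding $W \in C^3(\mathcal{D},\mathbb{R})$ via~\eqref{eq:Vstar_extended} produces residuals $R^W_{i} \in C^3$, and by Hornik's theorem \cite{hornik1991approximation} I can choose $N$ and $\Theta$ so that each selected output $\hat R_N^{(i)}$ is within $\delta$ of $R^W_i$ in $C^1(\bar{\mathcal{D}})$. Since $P = H_W$, the quadratic parts cancel and $\hat V - W = \sum_{i} (\hat R_N^{(i)} - R^W_i)\prod_k x_k^{i_k}$, so $\|\hat V - W\|_{C^1(\bar{\mathcal{D}})} \le C\delta$. The verification of the Lyapunov conditions then splits into two regimes. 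On a ball $B_\rho$ around the origin (with $\rho$ uniform in $\delta$, since the $\hat R_N^{(i)}$ are uniformly bounded in $C^1$), positivity and the decreasing condition hold by the local structure: $\hat V = \tfrac12 x^\top P x + O(\|x\|^3)$ and $\nabla\hat V \cdot f = \tfrac12 x^\top(A^\top P + PA)x + o(\|x\|^2)$, both controlled by $P \succ 0$ and $A^\top P + PA \prec 0$ independently of the approximation. Outside $B_\rho$ everything is handled by compactness: $W$ is bounded below by a positive constant, and $\nabla W \cdot f \le -\eta' < 0$ on $\{V^* \le 1-\varepsilon/4\}\setminus B_\rho$, so for $\delta$ small the bound $\|\hat V - W\|_{C^1}\le C\delta$ preserves $\hat V > 0$, preserves $\hat V \ge 1$ on the outward boundary points, and preserves $\nabla\hat V\cdot f < 0$. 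Since $\{\hat V < 1\} \subseteq \{W < 1 + C\delta\} \subseteq \{V^* \le 1-\varepsilon/4\}$ for $\delta$ small, these regimes cover all of $\{\hat V < 1\}$, so $\hat V$ is a Lyapunov function; by Theorem~\ref{thm:basin-attraction} this immediately gives the right inclusion $\mathcal{R}(\hat V) \subseteq \mathcal{R}(V^*)$. For the left inclusion I would use a compactness gap: on $S = \{V^* \le 1-\varepsilon\}$ one has $\sup_S W < 1$ (because $c(1-\varepsilon) < 1$ and $\alpha$ is small), so choosing $\delta$ with $C\delta$ below the gap $1 - \sup_S W$ forces $\hat V < 1$ throughout $S$, i.e. $S \subseteq \mathcal{R}(\hat V)$.

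The step I expect to be the main obstacle is the decreasing condition~\eqref{eq:decreasing} for $\hat V$. The difficulty is that $\{\hat V < 1\}$ may protrude slightly beyond $\{W < 1\}$, and a generic Lyapunov function need not be strictly decreasing beyond its own target sublevel set; near the boundary of a maximal basin the quantity $\nabla V^* \cdot f$ vanishes, so an uncontrolled perturbation could flip its sign. The scaling by $c > 1$ is precisely what overcomes this: it buys a fixed margin ($1-\varepsilon/4$ versus $1$) that keeps the perturbed basin inside the region where $\nabla V^* \cdot f$ is bounded away from zero, making the decreasing condition robust to the $C^1$ approximation error. Getting the order of quantifiers right --- first fixing $\varepsilon$ and $W$ (hence $\rho$, $\eta'$, and the gap), then choosing $\delta$, and hence $N$ and $\Theta$, small enough relative to all of them --- is the part that requires care.
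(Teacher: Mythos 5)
Your proposal is correct and reaches the same conclusion via the same basic machinery (Hornik's $C^1$ universal approximation applied to the third-order Taylor residuals, a near-origin regime controlled by $P\succ 0$ and $A^{\top}P+PA\prec 0$, and a compactness argument away from the origin), but it differs from the paper's proof in one substantive and worthwhile respect. The paper approximates $V^*$ itself (after upgrading the Hessian via Lemma~\ref{lem:smoothness}) and then asserts, by ``similar reasoning'' to the positivity step, that $\frac{\partial \hat V}{\partial x}\cdot f<0$ on $\mathcal{D}\setminus\{0\}$; this is the weakest point of the paper's argument, since $\nabla V^*\cdot f$ degenerates to zero on $\partial\mathcal{R}^*$ and the perturbed sublevel set $\{\hat V<1\}$ can protrude slightly beyond $\{V^*<1\}$, so the sign of the orbital derivative there is not obviously preserved under a small $C^1$ perturbation. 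You instead approximate the rescaled target $W=cV^*+\alpha V_q$ with $c=1/(1-\varepsilon/2)>1$, which confines $\{W<1\}$ (and, for $\delta$ small, $\{\hat V<1\}$) to a compact subset of $\mathcal{R}^*$ on which $\nabla V^*\cdot f$ is bounded away from zero, and which also supplies a margin $c-1$ for the boundary condition \eqref{eq:boundary}. This costs nothing for the inclusion \eqref{eq:set_inclusion} (your gap argument on $\{V^*\le 1-\varepsilon\}$ recovers the left inclusion, and Theorem~\ref{thm:basin-attraction} gives the right one), and it makes the verification of \eqref{eq:decreasing} genuinely robust rather than asserted. One small caveat to record explicitly: away from the origin $\nabla V_q\cdot f$ need not be negative, so $\alpha$ must be taken small enough, relative to the uniform bound $\eta$ on $-c\,\nabla V^*\cdot f$ over the compact set minus a ball, that the quadratic correction cannot flip the sign --- the same implicit smallness condition already present in Lemma~\ref{lem:smoothness}.
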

\vspace*{0.2cm}
\begin{proof} 
    We provide a formal proof in Appendix~\ref{app:proof_lyapunov} and briefly outline it here. For $P = H^*$, the proposed Taylor-neural Lyapunov function in \eqref{eq:VhatFull} is quadratic and as close to $V^*$ as desired in a sufficiently small neighborhood $\mathcal{N}$ around the origin. Outside of $\mathcal{N}$, one can choose $\hat{R}_N^{(i)}$ to be as close to $R_i$ as desired provided a sufficient number of neurons \cite[Theorem~4]{hornik1991approximation}, so that $\hat{V}$ is a Lyapunov function and the infinity-norm error between $V$ and $\hat{V}$ is less than $\varepsilon$, giving \eqref{eq:set_inclusion}.
\end{proof}

\begin{remark}
    We remark that previous papers \cite{chang2019neural,abate2020formal,LarsGrune,dawson2022safe,gaby2022lyapunovnet,zhou2022neural,liu2023physics} proposing neural Lyapunov functions could not argue that they are actually Lyapunov functions, only approximations thereof. This is pointed out, for example, in \cite{gaby2022lyapunovnet}, where the authors remark that a neural network approximation of a Lyapunov function usually does not have a negative time derivative everywhere around the origin.
    Conversely, our architecture proposed in Definition~\ref{def:taylor-neural-lyapunov} does indeed yield a Lyapunov function (as proved in Theorem~\ref{prop:lyapunov}), since we designed it on the blueprint of a third-order Taylor expansion.
\end{remark}

\smallskip

So far, we have defined the architecture of our Taylor-neural Lyapunov functions and characterized their approximation properties. Now, we turn to the problem of training the proposed Lyapunov functions. To this end, in the following section, we apply the physics-informed learning paradigm \cite{raissi2019physics,karniadakis2021physics}, since the function we train needs to verify the constraints~\eqref{eq:lyapunov_definition}, which translate into partial differential equations.

\begin{remark}
    Note that the error between a maximal Lyapunov function $V^*$ and $\hat{V}$ can be upper-bounded as follows:
    \begin{multline*}
        \exists P \succ 0, \forall x \in \mathcal{R}^*, \quad |V^*(x) - \hat{V}(x)| \leq \\
        \sum_{i\in\mathcal{I}} | R_{i_1,\dots,i_n}(x) - \hat{R}_N^{(i)}(x) | \prod_{k=1}^n |x_k^{i_k}|.
    \end{multline*}
    By the universal approximation theorem \cite{hornik1991approximation}, we get that 
    \[
    \forall \varepsilon > 0, \exists N, \forall x \in \mathcal{R}^*, \quad |V^*(x) - \hat{V}(x)| \leq \varepsilon \sum_{i\in\mathcal{I}} \prod_{k=1}^n |x_k^{i_k}|.
    \]
    The approximation error grows then from the origin to the boundary of the region of attraction $\mathcal{R}^*$. This results in a more conservative approximation of the region of attraction, as proven by the theorem. Same performances are noticed, without justifications, in other similar approaches such as LyzNet \cite{liu2023physics}.
\end{remark}

\section{Unsupervised Learning of a Taylor-Neural Lyapunov Function}\label{sec:unsupervised}

In this section, we discuss how to train a Taylor-neural Lyapunov function, by performing the following steps.
In section~\ref{subsec:learning-problem} we formulate the training problem, using the physics-informed learning paradigm \cite{raissi2019physics,karniadakis2021physics} to encode the properties of maximal Lyapunov functions.
In section~\ref{subsec:problem-reformulation}, we show how to reformulate the learning problem into a numerically tractable one.
Then, in section~\ref{subsec:training-algorithm} we propose a tailored, efficient training algorithm, and finally analyze convergence in section~\ref{subsec:convergence}.

\subsection{Learning problem formulation}\label{subsec:learning-problem}
The goal of this section is to formulate the learning problem to train the Taylor-neural Lyapunov function proposed in Definition~\ref{def:taylor-neural-lyapunov}.
In particular, in our formulation, we focus on numerical tractability and on enforcing the properties of a maximal Lyapunov function in \eqref{eq:lyapunov_definition} and \eqref{eq:zubov_equality}.

For better numerical properties, we introduce a slightly modified Lyapunov function candidate
\begin{equation}\label{eq:V_tilde}
    \tilde{V}(x) = \min \left\{ 1, \hat{V}(x) \right\} + \gamma^2 \| x \|^2
\end{equation}
where $\gamma \neq 0$ is a small parameter. Importantly, this choice does not worsen the approximation capabilities, as shown by the following lemma.

\begin{lemma}[Equivalent approximation capabilities]\label{lem:equivalent-approximators}
    The function $\tilde{V}$ has the same approximation capabilities, characterized by Theorem~\ref{prop:lyapunov}, as $\hat{V}$ (defined in~\eqref{eq:VhatFull}).
\end{lemma}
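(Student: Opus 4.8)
The plan is to prove the two clauses of Theorem~\ref{prop:lyapunov} for $\tilde V$, reusing $\hat V$ as a scaffold. Concretely, given $\varepsilon \in (0,1)$ I would first invoke Theorem~\ref{prop:lyapunov} to obtain $N$, $P \succ 0$ and $\Theta$ for which $\hat V$ is a Lyapunov function satisfying \eqref{eq:set_inclusion}, and then show that $\tilde V$ in \eqref{eq:V_tilde} inherits both properties as soon as $\gamma \neq 0$ is taken small enough. The key preliminary observation is that on the region that matters the clipping is inactive: if $\tilde V(x) < 1$ then $\min\{1,\hat V(x)\} + \gamma^2 \|x\|^2 < 1$ forces $\min\{1,\hat V(x)\} < 1$, hence $\hat V(x) < 1$ and $\tilde V(x) = \hat V(x) + \gamma^2 \|x\|^2$ there, a smooth function. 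This already gives $\mathcal{R}(\tilde V) \subseteq \mathcal{R}(\hat V) \subseteq \mathcal{R}(V^*)$, the upper inclusion of \eqref{eq:set_inclusion}. Conditions \eqref{eq:definite} and \eqref{eq:positive} are immediate, since $\hat V(0)=0$ yields $\tilde V(0)=0$, while $\hat V(x)\ge 0$ and $\gamma^2\|x\|^2 > 0$ give $\tilde V(x) > 0$ for $x \neq 0$; moreover the Hessian of $\tilde V$ at the origin equals that of $\hat V$ plus $2\gamma^2 I$, so definite positiveness is preserved. The boundary condition \eqref{eq:boundary} also holds at once: for $x \in \partial\mathcal{D}$ with $f(x)\cdot \vec{n}(x) \ge 0$ we have $\hat V(x) \ge 1$, so $\min\{1,\hat V(x)\} = 1$ and $\tilde V(x) \ge 1$.

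The core of the argument is the decrease condition \eqref{eq:decreasing}, required on $\{x \in \mathcal{D}\setminus\{0\} : \tilde V(x) < 1\}$, where $\frac{\partial \tilde V}{\partial x}(x)\cdot f(x) = \frac{\partial \hat V}{\partial x}(x)\cdot f(x) + 2\gamma^2 \langle x, f(x)\rangle$, so I must ensure the $O(\gamma^2)$ perturbation does not destroy the strict negativity known for $\hat V$. I would split the domain. Near the origin, Assumption~\ref{ass:linearization} together with $A^{\top}P + PA \prec 0$ (as in Lemma~\ref{lem:lemma1}) give $\frac{\partial \hat V}{\partial x}(x)\cdot f(x) \le -c\|x\|^2 + o(\|x\|^2)$, while the Lipschitz bound $\|f(x)\| \le L\|x\|$ yields $|2\gamma^2 \langle x, f(x)\rangle| \le 2\gamma^2 L\|x\|^2$; hence for $\gamma^2 < c/(2L)$ the sum is strictly negative on some ball $\|x\| \le r$, independently of where in $\{\tilde V < 1\}$ the point lies. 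Away from the origin, on $\{\tilde V < 1,\ \|x\| \ge r\}$ the clipping margin forces $\hat V(x) \le 1 - \gamma^2 r^2$, placing $x$ in a sublevel set strictly interior to $\mathcal{R}(\hat V)$; intersected with the compact $\overline{\mathcal{D}} \cap \{\|x\| \ge r\}$ this set is compact, on it $\frac{\partial \hat V}{\partial x}\cdot f$ is strictly negative and hence bounded above by some $-m < 0$, whereas $|2\gamma^2 \langle x, f(x)\rangle| \le 2\gamma^2 L n$, so the decrease survives whenever $2\gamma^2 L n < m$.

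The main obstacle is precisely the last comparison: the margin $m$ depends on $\gamma$, because the interior sublevel set $\{\hat V \le 1 - \gamma^2 r^2\}$ approaches $\partial\mathcal{R}(\hat V)$ as $\gamma \to 0$, and on that boundary the decrease of a maximal Lyapunov function vanishes, since \eqref{eq:zubov_equality} gives $\frac{\partial V^*}{\partial x}\cdot f = -\phi(x)\,(1 - V^*(x)) \to 0$. The delicate point is therefore to play the clipping-induced slack $1 - \hat V(x) > \gamma^2\|x\|^2$ against the perturbation $2\gamma^2 \langle x, f(x)\rangle$, both of order $\gamma^2$: using the $C^1$ closeness of $\hat V$ to $V^*$ one has $\frac{\partial \hat V}{\partial x}\cdot f \lesssim -\phi(x)\gamma^2\|x\|^2$ and $|2\gamma^2\langle x,f(x)\rangle| \le 2\gamma^2 L\|x\|^2$, so the perturbed derivative is dominated wherever $\phi(x) > 2L$, and the remaining near-boundary shell must be controlled by a careful (compactness-based) choice of $\gamma$. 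Finally, the lower inclusion of \eqref{eq:set_inclusion} is routine: applying Theorem~\ref{prop:lyapunov} so that $|V^*(x) - \hat V(x)| < \varepsilon/2$ on $\mathcal{R}^*$ gives $\hat V(x) \le 1 - \varepsilon/2$ on $\{V^* \le 1 - \varepsilon\}$, whence $\tilde V(x) \le 1 - \varepsilon/2 + \gamma^2 n < 1$ as soon as $\gamma^2 < \varepsilon/(2n)$, so that $\{x \in \mathcal{D} : V^*(x) \le 1 - \varepsilon\} \subseteq \mathcal{R}(\tilde V)$. Taking $\gamma$ below all the thresholds above then yields a $\tilde V$ that is a Lyapunov function and satisfies \eqref{eq:set_inclusion}, establishing the equivalence of approximation capabilities.
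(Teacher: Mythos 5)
Your proposal takes a genuinely different route from the paper, and it stalls exactly where you say it does. You fix the $\hat V$ produced by Theorem~\ref{prop:lyapunov} and then try to show that adding $\gamma^2\|x\|^2$ is a harmless perturbation for small $\gamma$. The easy clauses (positive definiteness, the boundary condition \eqref{eq:boundary}, both set inclusions) go through as you describe, but the decrease condition \eqref{eq:decreasing} does not close: near $\partial\mathcal{R}(\hat V)$ the available margin is of order $-\phi(x)\bigl(1-\hat V(x)\bigr)$, the clipping slack only guarantees $1-\hat V(x)>\gamma^2\|x\|^2$, and the perturbation is $2\gamma^2\langle x,f(x)\rangle=O(\gamma^2\|x\|^2)$. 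Both competing terms scale as $\gamma^2$, so shrinking $\gamma$ never changes their ratio, and on the shell where $\phi(x)\le 2L$ no ``careful compactness-based choice of $\gamma$'' can rescue the sign. You flag this honestly, but it is a genuine unresolved gap, not a technicality.

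The idea you are missing is that you are not obliged to build $\tilde V$ on top of the \emph{same} parameters as $\hat V$: the lemma only asks that the class of functions $\tilde V$ approximates as well as the class of functions $\hat V$. The paper's proof pre-compensates inside the architecture. Starting from the best achievable $\hat V^*$ with Hessian $P^*\succ 0$, it sets $\hat V(x)=\hat V^*(x)-\gamma\|x\|^2$ (absorbed into the quadratic part, with $\gamma$ below the smallest eigenvalue of $P^*$ so positive definiteness and $A^{\top}P+PA\prec 0$ survive). Then wherever $\hat V(x)<1$ the clipping in \eqref{eq:V_tilde} is inactive and $\tilde V(x)=\hat V(x)+\gamma\|x\|^2=\hat V^*(x)$ \emph{exactly}, so all Lyapunov properties and the set inclusion are inherited verbatim on the sublevel set that matters, with no perturbation analysis at all. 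If you want to salvage your write-up, replace the perturbation argument for \eqref{eq:decreasing} by this cancellation: choose the quadratic part of $\hat V$ to be $P-2\gamma^2 I$ rather than $P$, and the problematic term $2\gamma^2\langle x,f(x)\rangle$ disappears identically.
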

\begin{proof}
    Consider a function $\hat{V}^*$ which is an optimal Lyapunov function given a fixed number of neurons. Denote by $P^* \succ 0$ its Hessian at the origin. Then, we can construct $\hat{V}(x) = \hat{V}^*(x) - \gamma \|x\|^2$ for $\gamma > 0$ and strictly smaller than the minimum eigenvalue of $P^*$. Consequently, the Hessian of $\tilde{V}$ at the origin is $P - \gamma I \succ 0$. Note as well that $\hat{V} < \hat{V}^*$ so $\mathcal{R}(\hat{V}^*) \subset \mathcal{R}(\hat{V})$.

    Define $\tilde{V}$ as in \eqref{eq:V_tilde}. Then, when $\hat{V}$ is smaller than $1$, we get $\tilde{V}(x) = \hat{V}(x) + \gamma \|x\|^2 = \hat{V}^*(x)$. Since we want to approximate the basin of attraction, the error between the two functions outside of $\mathcal{R}(\hat{V}^*)$ does not impact the approximation capabilities.
\end{proof}

The modified function $\tilde{V}$ is more stable numerically since it will be quadratic outside $\mathcal{R}(\hat{V})$. This fact will be used in the training algorithm proposed in section~\ref{subsec:training-algorithm}.

However, compared to constructions in \cite{gaby2022lyapunovnet}, the Lyapunov function candidate~\eqref{eq:V_tilde} is not definite positive by design. The following lemma (inspired by \cite[Lemma 3]{vannelli1985maximal}) ensures that the positivity constraint \eqref{eq:positive} can be removed under some conditions.

\begin{lemma} \label{lem:positive_definite}
    Let $V: \mathcal{D} \to \mathbb{R}$ continuously differentiable such that $V(0) = 0$, $V$ is positive definite in a neighborhood of the origin, and equations \eqref{eq:decreasing}-\eqref{eq:boundary} hold. Then $V$ is a Lyapunov function.
\end{lemma}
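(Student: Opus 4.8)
The plan is to show that the only condition of Definition~\ref{def:lyapunov-function} not already granted by the hypotheses is the global positivity \eqref{eq:positive}, and to establish it by a trajectory-based invariance argument in the spirit of \cite[Lemma~3]{vannelli1985maximal}. Since $V$ is continuously differentiable with $V(0)=0$ and conditions \eqref{eq:decreasing} and \eqref{eq:boundary} hold by assumption, it remains only to prove $V(x)>0$ for every $x\in\mathcal{D}\setminus\{0\}$. First I would observe that positivity is automatic on $\{x : V(x)\geq 1\}$, so it suffices to rule out the existence of a point $x_0\in\mathcal{D}\setminus\{0\}$ with $V(x_0)\leq 0$; any such point satisfies $V(x_0)<1$, so that \eqref{eq:decreasing} is applicable along the trajectory issued from it.

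Arguing by contradiction, suppose such an $x_0$ exists and consider the forward trajectory $t\mapsto\phi(x_0,t)$, which is well defined for all $t\geq 0$ by forward completeness. As long as the trajectory lies in $\mathcal{D}\setminus\{0\}$ with $V<1$, condition \eqref{eq:decreasing} gives $\frac{\partial V}{\partial x}(\phi(x_0,t))\cdot f(\phi(x_0,t))<0$, so $t\mapsto V(\phi(x_0,t))$ is strictly decreasing; in particular $V(\phi(x_0,t))\leq V(x_0)<1$ is preserved, and the trajectory never reaches the origin (which would require $V$ to climb back to $V(0)=0>V(x_0)$). Next I would show the trajectory cannot leave $\mathcal{D}$: if it met $\partial\mathcal{D}$ at some $x_1$, then $V(x_1)\leq V(x_0)<1$ together with the contrapositive of \eqref{eq:boundary} forces $f(x_1)\cdot\vec{n}(x_1)<0$, i.e. the flow points strictly inward, so no exit is possible. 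Hence the trajectory stays forever in the bounded set $\{x\in\mathcal{D} : V(x)\leq V(x_0)\}$, avoids the neighborhood of the origin on which $V$ is positive, and $V(\phi(x_0,t))$ decreases to some limit $L\leq V(x_0)\leq 0$. Applying LaSalle's invariance principle, the (nonempty, compact, invariant) $\omega$-limit set lies in $\{x : \frac{\partial V}{\partial x}(x)\cdot f(x)=0\}$, which inside $\{V<1\}$ reduces to $\{0\}$ by \eqref{eq:decreasing}; this forces $\phi(x_0,t)\to 0$ and hence $L=V(0)=0$, contradicting $L\leq V(x_0)$ together with the strict decrease (which yields $L<0$). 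This contradiction establishes \eqref{eq:positive}, and therefore that $V$ is a Lyapunov function.

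The main obstacle is the behavior near $\partial\mathcal{D}$: since $\mathcal{D}$ is open, the sublevel set $\{x\in\mathcal{D} : V(x)\leq V(x_0)\}$ need not be compact, as its closure may touch the boundary at points where $V<1$, and there the invariance principle must be justified with care. The key is precisely condition \eqref{eq:boundary}, which guarantees that at any boundary point with $V<1$ the flow is strictly inward; by continuity of $f$ and $\vec{n}$ this persists on a relative neighborhood, so the trajectory cannot accumulate on $\partial\mathcal{D}$ and the $\omega$-limit set remains in the interior. Making this trapping argument rigorous—ensuring LaSalle applies on the genuinely compact set $\overline{\{V\leq V(x_0)\}}\cap\overline{\mathcal{D}}$ while excluding boundary accumulation—is the delicate step; the positive-definiteness of $V$ near the origin is then used only to guarantee that the excluded neighborhood of $0$ exists and that $V$ is bounded below, which closes the contradiction.
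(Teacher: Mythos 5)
Your argument is correct and shares the paper's overall strategy: assume a point $x_0$ with $V(x_0)\leq 0$, follow the forward trajectory, use \eqref{eq:decreasing} to force $V$ to decrease strictly, use \eqref{eq:boundary} to keep the trajectory inside $\mathcal{D}$, and derive a contradiction. The difference lies in how the contradiction is closed. The paper extracts a \emph{uniform} decay rate: since $DV_0$ is continuous and strictly negative on the compact set obtained by removing the neighborhood $\mathcal{N}$ (where $V$ is positive definite) from the relevant sublevel set, there exists $\xi>0$ with $DV_0\leq-\xi$ there, whence $V(\phi(x_0,t))\leq V(x_0)-\xi t\to-\infty$, which is impossible for a continuous function on a bounded domain; the paper's dichotomy about whether the trajectory approaches $\partial\mathcal{R}(V)$ is its (rather terse) way of handling the same boundary issue you flag. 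You instead invoke LaSalle's invariance principle to force the $\omega$-limit set into $\{0\}$, which is incompatible with the trajectory being confined to the region where $V\leq V(x_0)\leq 0$ and hence avoiding $\mathcal{N}$. Both routes rest on the same two facts — no escape through $\partial\mathcal{D}$ by the contrapositive of \eqref{eq:boundary}, and no entry into $\mathcal{N}$ because $V$ is already non-positive — and both must confront the non-compactness of the sublevel set near $\partial\mathcal{D}$; you are considerably more explicit about this than the paper is, and your inward-flow trapping argument is the right way to make either proof rigorous. The paper's quantitative route is marginally more elementary in that it needs only continuity and compactness rather than the invariance principle; if you keep LaSalle, you should state precisely the compact positively invariant set on which it is applied, which is exactly the delicate step you identify.
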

\begin{proof}
    See Appendix~\ref{app:positive_definite}.
\end{proof}

\smallskip

Since we are interested in approximating a maximal Lyapunov function, we turn now to enforcing Zubov's equation~\eqref{eq:zubov_equality} for $\tilde{V}$.
Clearly,~\eqref{eq:zubov_equality} requires finding an unknown function $\phi$, which would yield an infinite-dimensional problem. Instead, we define the following inequality, $p > 2$:
\begin{equation}
    \label{eq:inequality_zubov}
    \begin{split}
        &\exists \beta \neq 0, \quad \forall x \in \mathcal{R}(\tilde{V}), \\
        &\quad DV_{\beta}(x) \triangleq \frac{\partial \tilde{V}}{\partial x}(x) \cdot f(x) + \beta^2 \left( 1 - \tilde{V}(x) \right) \| x \|^p \leq 0,
    \end{split}
\end{equation}
whose equivalence with~\eqref{eq:zubov_equality} is proved in the following lemma. We remark that $\beta$ serves as another parameter to be learned.

\begin{lemma}[Equivalence of \eqref{eq:zubov_equality} and \eqref{eq:inequality_zubov}] \label{lem:bound_phi}
    If equation~\eqref{eq:zubov_equality} holds, then there exists $\beta \neq 0$ such that inequality~\eqref{eq:inequality_zubov} is satisfied.
\end{lemma}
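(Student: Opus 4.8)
The plan is to substitute Zubov's equation \eqref{eq:zubov_equality} into the definition of $DV_\beta$ and reduce the claim to a single scalar inequality for $\beta$. Reading the lemma as a feasibility statement for the maximal Lyapunov function, I take $\tilde{V} = V^*$, so that \eqref{eq:zubov_equality} gives $\frac{\partial V^*}{\partial x}(x) \cdot f(x) = -\phi(x)(1 - V^*(x))$ and hence
\[
    DV_\beta(x) = \bigl(1 - V^*(x)\bigr)\bigl(\beta^2 \|x\|^p - \phi(x)\bigr).
\]
On the basin $\mathcal{R}(V^*) = \{x \in \mathcal{D} : V^*(x) < 1\}$ the factor $1 - V^*(x)$ is strictly positive, while on $\partial \mathcal{R}(V^*)$ it vanishes and $DV_\beta$ is automatically zero. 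Thus $DV_\beta(x) \leq 0$ is equivalent to $\beta^2 \|x\|^p \leq \phi(x)$ on $\mathcal{R}(V^*) \setminus \{0\}$, and it suffices to show that
\[
    \rho := \inf_{x \in \mathcal{R}(V^*) \setminus \{0\}} \frac{\phi(x)}{\|x\|^p}
\]
is strictly positive; then any $\beta \neq 0$ with $\beta^2 \leq \rho$ proves the claim.

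To bound $\rho$ away from zero I would split $\mathcal{R}(V^*) \setminus \{0\}$ into a small ball $B(0, r)$ and the compact remainder $K = \overline{\mathcal{R}(V^*)} \setminus B(0, r)$. On $K$, continuity and positive definiteness of $\phi$ on the compact closure of $\mathcal{D}$ yield $\min_K \phi = m > 0$, while $\|x\|^p$ is bounded above by some $M$; hence $\phi(x)/\|x\|^p \geq m/M > 0$ there. On $B(0, r)$ I would instead use the local behaviour of $\phi$: combining \eqref{eq:zubov_equality} with the expansion established in Lemma~\ref{lem:lemma1} gives $\frac{\partial V^*}{\partial x} \cdot f = \frac{1}{2} x^{\top}(A^{\top} H^* + H^* A) x + o(\|x\|^2)$ together with $1 - V^*(x) \to 1$, so that $\phi(x) = -\frac{1}{2} x^{\top}(A^{\top} H^* + H^* A) x + o(\|x\|^2)$. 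Under Assumption~\ref{ass:linearization} one obtains a quadratic lower bound $\phi(x) \geq c \|x\|^2$ for some $c > 0$ on a sufficiently small ball, and since $p > 2$ this yields $\phi(x)/\|x\|^p \geq c \|x\|^{2-p} \geq c\, r^{2-p} > 0$ on $B(0, r) \setminus \{0\}$. Taking the minimum of the two bounds shows $\rho > 0$.

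The main obstacle is precisely this near-origin estimate: positive definiteness of $\phi$ alone does not force an at-least-quadratic lower bound, so one must exploit the structure inherited from the Hurwitz linearization. The delicate point is to upgrade the semidefinite conclusion $A^{\top} H^* + H^* A \preceq 0$ of Lemma~\ref{lem:lemma1} to the strict form needed for $\phi(x) \geq c \|x\|^2$; this follows because the quadratic leading term of the positive definite function $\phi$ is exactly $-\frac{1}{2} x^{\top}(A^{\top} H^* + H^* A) x$, which must therefore itself be a positive definite form. The hypothesis $p > 2$ is what makes the argument close cleanly: it guarantees that $\|x\|^p$ is dominated by $\phi$ near the origin, so the binding constraint on $\beta$ arises only from the compact bulk $K$, where the estimate is routine.
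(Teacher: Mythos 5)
Your overall strategy is the same as the paper's: factor $DV_\beta(x) = \bigl(1-\tilde V(x)\bigr)\bigl(\beta^2\|x\|^p-\phi(x)\bigr)$, then bound $\phi(x)/\|x\|^p$ away from zero by splitting $\mathcal{R}$ into a small ball around the origin (handled via the quadratic expansion of $\phi$ and $p>2$) and a compact remainder (handled via continuity and positive definiteness of $\phi$). However, there is a genuine gap in your near-origin estimate. You claim that because $\phi$ is positive definite, its quadratic leading term $-\tfrac12 x^{\top}(A^{\top}H^*+H^*A)x$ ``must therefore itself be a positive definite form.'' This implication is false: a positive definite function need not have a positive definite Hessian at the origin (e.g.\ $\phi(x_1,x_2)=x_1^2+x_2^4$ is positive definite but its Hessian at $0$ is only semidefinite). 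If $A^{\top}H^*+H^*A$ had a nontrivial kernel direction $v$, one would only get $\phi(tv)=o(t^2)$, and then $\phi(tv)/\|tv\|^p$ can tend to $0$ (take $\phi(tv)\sim t^4$ and $p=3$), so your infimum $\rho$ could vanish and the argument would not close. Lemma~\ref{lem:lemma1} genuinely only delivers $A^{\top}H^*+H^*A\preceq 0$, and no amount of appealing to positive definiteness of $\phi$ upgrades this to $\prec 0$.

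The paper avoids this by proving the lemma for the candidate $\tilde V$ of~\eqref{eq:V_tilde} rather than for a generic $V^*$: by Definition~\ref{def:taylor-neural-lyapunov} (and the projection onto the cone $\mathcal{C}(\gamma)$ in the training algorithm), the local Hessian $\tilde P = P+\gamma^2 I$ satisfies the \emph{strict} inequality $A^{\top}\tilde P+\tilde P A\prec 0$ by construction, which immediately gives $\phi(x)\ge c\|x\|^2$ near the origin and hence, for any $\bar\beta$, $\phi(x)\ge\bar\beta^2\|x\|^p$ on a small enough neighborhood since $p>2$. So the fix is not to derive strictness from positive definiteness of $\phi$, but to invoke the architectural constraint on $P$ (equivalently, Assumption~\ref{ass:linearization} plus the design choice $A^{\top}P+PA\prec 0$). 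With that substitution your proof matches the paper's; as written, the key step is unjustified.
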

\begin{proof}
    See Appendix~\ref{app:bound_phi}.
\end{proof}

\begin{remark}
    In \cite{liu2023physics}, they use an equality similar to \eqref{eq:zubov_equality}. The authors state that equality constraints are much better handled in training algorithms. However, on both sides of the equality, there is a neural network with a different architecture, implying that the equality cannot be satisfied. Compared to the inequality, which can be satisfied at all points, the equality will be violated without any certification that the inequality is still satisfied.
\end{remark}

\smallskip

We can now integrate~\eqref{eq:inequality_zubov}, which yields
\begin{equation}
    \label{eq:integral_zubov}
    \int_{\mathcal{R}(\tilde{V})} \Big[ DV_{\beta}(x) \Big]_+^2 dx  = 0,
\end{equation}
where, by definition of $[x]_+ = \max(0, x)$, $x \in \mathbb{R}$, we have $[x]_+^2 = ( \max(0, x) )^2$.
We will use~\eqref{eq:integral_zubov} as one of the constraints in the learning problem. To seek \textit{maximal} Lyapunov functions, we need to add additional constraints provided by the following lemma.

\begin{lemma}[Constraints for maximal Lyapunov functions] \label{lem:equivalence_zubov}
    The two following statements are equivalent:
    \begin{enumerate}
        \item Equation~\eqref{eq:integral_zubov} holds for $\beta \neq 0$ with
        \begin{subequations} \label{eq:optimization_definition}
            \begin{align}
                &\forall x \in \partial\mathcal{R}(\tilde{V}) \setminus \partial{\mathcal{D}}, \quad DV_0(x) = 0, \label{eq:zubov_boundary} \\
                &\forall x \in \partial{\mathcal{D}} , \quad f(x) \cdot \vec{n}(x) \geq 0 \ \Rightarrow \ \tilde{V}(x) \geq 1, \label{eq:invariant_boundary}
            \end{align}
        \end{subequations}
        where $DV_{0}(x) \triangleq \frac{\partial \tilde{V}}{\partial x}(x) \cdot f(x)$.
        \item Equations~\eqref{eq:lyapunov_definition} and \eqref{eq:zubov_equality} hold.
    \end{enumerate}
\end{lemma}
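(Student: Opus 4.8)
The plan is to split the claimed equivalence into its two implications, after first replacing the integral condition \eqref{eq:integral_zubov} by an equivalent pointwise one. I would start by noting that on $\mathcal{R}(\tilde V)$ we have $\hat V(x) < 1$, so there $\tilde V$ coincides with the smooth function $\hat V + \gamma^2 \|x\|^2$ and the integrand $\bigl[ DV_\beta(x) \bigr]_+^2$ is continuous and non-negative. Hence \eqref{eq:integral_zubov} holds if and only if the integrand vanishes identically, i.e.\ $DV_\beta(x) \le 0$ for all $x \in \mathcal{R}(\tilde V)$, which is exactly \eqref{eq:inequality_zubov}. I would carry this reduction through both directions so the argument only manipulates the pointwise inequality.

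For the implication $(2) \Rightarrow (1)$, I assume $\tilde V$ satisfies \eqref{eq:lyapunov_definition} and Zubov's equation \eqref{eq:zubov_equality} with a positive definite $\phi$. On the interior boundary $\partial\mathcal{R}(\tilde V)\setminus\partial\mathcal{D}$ one has $\tilde V(x)=1$, so $1-\tilde V(x)=0$ and \eqref{eq:zubov_equality} forces $DV_0(x)=0$, which is precisely \eqref{eq:zubov_boundary}; condition \eqref{eq:invariant_boundary} is a verbatim restatement of \eqref{eq:boundary} and so holds. Substituting \eqref{eq:zubov_equality} into the definition of $DV_\beta$ gives the identity
\[ DV_\beta(x) = \bigl(1-\tilde V(x)\bigr)\bigl(\beta^2\|x\|^p - \phi(x)\bigr), \]
so on $\mathcal{R}(\tilde V)$, where $1-\tilde V(x)>0$, the inequality $DV_\beta(x)\le 0$ is equivalent to $\beta^2\|x\|^p \le \phi(x)$. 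The existence of a $\beta\neq 0$ making this hold on $\mathcal{R}(\tilde V)$ is exactly the content of Lemma~\ref{lem:bound_phi}, and this yields \eqref{eq:inequality_zubov}, hence \eqref{eq:integral_zubov}.

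For the converse $(1) \Rightarrow (2)$, the pointwise inequality gives $DV_0(x) \le -\beta^2\bigl(1-\tilde V(x)\bigr)\|x\|^p$; since $\beta\neq 0$ and both $1-\tilde V(x)>0$ and $\|x\|^p>0$ on $\mathcal{R}(\tilde V)\setminus\{0\}$, this is the strict decrease condition \eqref{eq:decreasing}. Together with $\tilde V(0)=0$ (immediate from \eqref{eq:V_tilde}), the local positive definiteness of $\tilde V$ near the origin — its Hessian there is $P+2\gamma^2 I\succ 0$, the cubic residual terms contributing nothing — and \eqref{eq:invariant_boundary}$\,=\,$\eqref{eq:boundary}, Lemma~\ref{lem:positive_definite} certifies that $\tilde V$ is a genuine Lyapunov function, establishing \eqref{eq:lyapunov_definition}. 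To recover \eqref{eq:zubov_equality} I would define the candidate multiplier $\phi(x) = -DV_0(x)/\bigl(1-\tilde V(x)\bigr)$ on $\mathcal{R}(\tilde V)$; the sign information just derived makes $\phi$ positive on $\mathcal{R}(\tilde V)\setminus\{0\}$ with $\phi(0)=0$, so it is positive definite there and $\tilde V$ satisfies \eqref{eq:zubov_equality} by construction. Applying Theorem~\ref{thm:region-attraction} then yields maximality, $\mathcal{R}(\tilde V)=\mathcal{R}^*$.

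The main obstacle is the behaviour of the constructed $\phi$ at the boundary $\partial\mathcal{R}(\tilde V)$, where $\tilde V \to 1$ and the denominator $1-\tilde V$ vanishes; here \eqref{eq:zubov_boundary} is precisely what is needed, since it forces the numerator $-DV_0$ to vanish simultaneously, so the indeterminate $0/0$ ratio can admit a finite positive limit (via a directional/L'Hôpital argument along the outward normal, using $\nabla\tilde V\neq 0$ on the level set $\{\tilde V=1\}$) and $\phi$ stays well defined and positive definite up to the boundary. A secondary point to settle is the discrepancy with Theorem~\ref{thm:region-attraction}, which asks for $\phi$ positive definite on the closure of $\mathcal{D}$: since the hypotheses of statement~(1) constrain $DV_0$ only on $\overline{\mathcal{R}(\tilde V)}$, one either extends $\phi$ by an arbitrary positive definite function on $\mathcal{D}\setminus\mathcal{R}(\tilde V)$ — Zubov's equation being used only to characterize the sublevel set $\{\tilde V<1\}$ — or restricts attention to $\mathcal{R}(\tilde V)$, which is all the region-of-attraction conclusion requires.
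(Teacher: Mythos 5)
Your proof follows essentially the same route as the paper's: Lemma~\ref{lem:bound_phi} together with evaluating Zubov's equation on $\partial\mathcal{R}(\tilde V)\setminus\partial\mathcal{D}$ for one direction, and the construction $\phi(x) = -DV_0(x)\left(1-\tilde V(x)\right)^{-1}$ with positive definiteness deduced from the bound $\phi(x)\ge\beta^2\|x\|^p$ for the other. Your version is in fact more complete than the paper's terse argument --- the pointwise reduction of the integral constraint, the invocation of Lemma~\ref{lem:positive_definite} to recover \eqref{eq:positive}, and the $0/0$ behaviour of $\phi$ on $\partial\mathcal{R}(\tilde V)$ are all steps the paper leaves implicit --- so there is nothing to correct.
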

\begin{proof}
    See Appendix~\ref{app:equivalence_zubov}.
\end{proof}

\smallskip

By this lemma, the function $\tilde{V}$ is a maximal Lyapunov function provided that \eqref{eq:optimization_definition} is satisfied.
Then, enforcing~\eqref{eq:optimization_definition} together with the previous~\eqref{eq:integral_zubov} yields the following learning problem:
\begin{equation}
    \label{eq:optimization_problem}
    \begin{array}[t]{cl}
         \displaystyle \argmin_{\substack{P \succ 0, \Theta, \beta \neq 0, \\ \gamma \neq 0}} & \mathcal{O}(\Psi) \\
         \text{s.t.} & \displaystyle \mathcal C_1(\Psi) \triangleq \int_{\mathcal{R}(\tilde{V})} \Big[ DV_{\beta}(x) \Big]_+^2 dx = 0, \\
         & \displaystyle \mathcal C_2(\Psi) \triangleq \int_{\mathcal{B}} \left[ 1 - \tilde{V}(s) \right]_+^2 ds = 0,
    \end{array}
\end{equation}
where $\Psi = \{ P, \Theta, \gamma, \beta\}$ is the tensor of decision variables, and
\[
    \begin{array}{l}
        \displaystyle \mathcal O(\Psi) = \int_{\partial\mathcal{R}(\tilde{V}) \setminus \partial {\mathcal{D}}} DV_0(s)^2 \ ds, \\
        \mathcal{B} = \left\{ x \in \partial{\mathcal{D}} \ | \ f(x) \cdot \vec{n}(x) \geq 0 \right\},
    \end{array}
\]
with $\vec{n}(x)$ being the outgoing normal vector to $\mathcal{D}$ at $x \in \partial \mathcal{D}$.
Notice that the objective function $\mathcal{O}$ in~\eqref{eq:optimization_problem} refers to the maximization of the basin of attraction through \eqref{eq:zubov_boundary}, while the constraints $\mathcal{C}_1$ and $\mathcal{C}_2$ enforce~\eqref{eq:integral_zubov} and~\eqref{eq:invariant_boundary}, respectively.
We also recall that the decision variables $P \in \mathbb{R}^{n \times n}$ and $\Theta$ come from the architecture design in Definition~\ref{def:taylor-neural-lyapunov}, with $\Theta$ being the parameters ($N n + N_l N^2 + N n^3$ weights and $N_l N$ biases) of the neural network; while $\gamma \in \mathbb{R}$ and $\beta \in \mathbb{R}$ come from the modified architecture~\eqref{eq:V_tilde} and constraint~\eqref{eq:integral_zubov}, respectively.

\subsection{Problem sampling and reformulation}\label{subsec:problem-reformulation}
In the previous section, we have applied the physics-informed learning paradigm to formulate the learning problem~\eqref{eq:optimization_problem}. However, this problem is numerically intractable due to integrals and dynamic constraints.
Therefore, in this section, we provide a tailored sampling approach that allows us to reformulate the problem, using Lagrange theory, into a tractable one.

\subsubsection{Sampling the problem}
In the following, we build a sampled version of the problem by sampling the objective function and the constraints in turn.

\paragraph{Sampling the objective $\mathcal{O}$}\label{subsec:sampling-objective}
The objective $\mathcal{O}$ consists of an integral over a finite domain, and one might apply Monte Carlo approximation using a uniform sampling over the whole domain \cite[Section~7.7]{press2007numerical}.
However, this is not possible here since the integration is performed over a null set in $\mathcal{D}$.
Therefore, we apply the following approach.

We start by drawing $N_1$ points from $\partial \mathcal{D}$, which constitute the discrete set $\partial \mathcal{D}_s$. Then we scale these points so that they fall on the boundary of the set $\mathcal{R}(\tilde{V}) \cap \mathcal{D}$. This is accomplished by defining a new variable $\eta_i \in (0, 1]$ such that $\eta_i y_i \in \partial\mathcal{R}(\tilde{V})$ for each $y_i \in \partial \mathcal{D}_s$; we collect these points in the following set:
\[
    \partial\mathcal{R}_s(\partial \mathcal{D}_s) = \left\{ \eta_i y_i \ | \ y_i \in \partial \mathcal{D}_s \right\}.
\]
Now, provided that $\mathcal{R}(\tilde{V})$ is a strict star domain\footnote{By definition, $A$ is a strict star-domain at the origin if for any point $x \in A$ the line-segment connecting $0$ and $x$ is included in the interior of $A$.} at the origin, we can approximate the objective by
\[
    \frac{1}{L} \mathcal{O}(\Psi) \simeq \mathcal{O}_s(\Psi, \partial \mathcal{D}_s) \triangleq \frac{1}{\left| \partial \mathcal{D}_s \right|} \sum_{s \in \partial\mathcal{R}_s(\partial \mathcal{D}_s)} DV_0(s)^2
\]
where $L > 0$ is a constant related to the length of the curve $\partial \mathcal{R}(\tilde{V})$.

\paragraph{Sampling the first constraint $\mathcal{C}_1$}\label{subsec:sampling-constraint-1}
The first constraint $\mathcal{C}_1$ is an integral over part of the domain $\mathcal{D}$.
Thus, to sample it, we proceed as follows.

First, we draw $N_2$ uniformly from the whole domain $\mathcal{D}$, in line with the conventional sampling approach \cite{karniadakis2021physics}; these points constitute the discrete set $\mathcal{D}_{s}$.
Then, we approximate the constraint by selecting only those points in $\mathcal{D}_s$ which verify $\tilde{V}(x) < 1$. That is, we define the sampled constraint as
\begin{equation}
    \label{eq:sampled_constraint}
    \frac{1}{V} \mathcal{C}_1(\Psi) \simeq \mathcal{C}_{1s}(\Psi, \mathcal{D}_s) \triangleq
    \frac{1}{\left| \mathcal{R}_s(\mathcal{D}_s) \right|} \sum_{x \in \mathcal{R}_s(\mathcal{D}_s)} \Big[ DV_{\beta}(x) \Big]_+^2
\end{equation}
where $\mathcal{R}_s(\mathcal{D}_s) = \left\{ x \in \mathcal{D}_s \ | \ \tilde{V}(x) < 1 \right\}$, and $V$ is the volume of $\mathcal{R}(\tilde{V})$.

The novelty of this sampling approach compared to classical techniques is that the number of points in $\mathcal{R}_s$ depends on $\tilde{V}$, and consequently might change during the training.

\paragraph{Sampling the second constraint $\mathcal{C}_2$}
The second constraint $\mathcal{C}_2$ is an integral over part of the domain's boundary $\partial \mathcal{D}$, and we sample it via the following approach.

We first draw $N_3$ samples from the boundary $\partial \mathcal{D}$, which we collect in the set $\partial \mathcal{D}_s$.
Then, we select only the points in $\partial \mathcal{D}_s$ which lie in the intersection with $\mathcal{B}$, defining the set $\mathcal{B}_s(\partial \mathcal{D}_s) = \partial \mathcal{D}_s \cap \mathcal{B}$.
Finally, we sample the constraint at these points, which yields:
\[
    \frac{1}{L_2} \mathcal{C}_2(\Psi) \simeq \mathcal{C}_{2s}(\Psi, \partial \mathcal{D}_s) \triangleq
    \frac{1}{\left| \mathcal{B}_s(\partial \mathcal{D}_s) \right|} \sum_{x \in \mathcal{B}_s(\partial \mathcal{D}_s)} \Big[ 1 - \tilde{V}(x) \Big]_+^2,
\]
where $L_2 > 0$ is a constant related to the length of the curve $\partial \mathcal{D}$.

Notice that, similarly to the sampling of constraint $\mathcal{C}_1$, also the sampling of $\mathcal{C}_2$ is performed on a variable number of points.

\begin{remark}
    Note that the choice of the modified neural Lyapunov function in \eqref{eq:V_tilde} leads to only considering $\gamma$ as a decision variable if the boundary of $\mathcal{D}$ is not in the basin of attraction of $\tilde{V}$. It becomes easier to satisfy $\mathcal{C}_{2s}$ and justifies the use of the modified architecture.
\end{remark}

\paragraph{Sampled learning problem}
Applying the sampling techniques defined in the previous sections, we can then define the sampled version of~\eqref{eq:optimization_problem} as:
\begin{equation}
    \label{eq:sampled_optimization_problem}
    \begin{array}[t]{cl}
         \displaystyle \argmin_{P \succ 0, \Theta, \beta \neq 0, \gamma \neq 0} & \displaystyle \mathcal{O}_s(\Psi, \partial \mathcal{D}_s) \\
         \text{s.t.} & \mathcal{C}_{1s}(\Psi, \mathcal{D}_s) = 0, \\
         & \mathcal{C}_{2s}(\Psi, \partial \mathcal{D}_s) = 0,
    \end{array}
\end{equation}
for any discrete sets $\partial \mathcal{D}_s \subset \partial \mathcal{D}$ and $\mathcal{D}_s \subset \mathcal{D}$.

\subsubsection{Lagrangian formulation}
The optimization problem~\eqref{eq:sampled_optimization_problem} is a learning problem with constraints, and different solution strategies have been proposed in the physics-informed learning literature \cite{lu2021physics}.
In this paper, we apply the strategy based on Lagrange theory that was discussed in \cite{9683295,delle2022new}, and later formalized in \cite{barreau2025accuracy}.

The idea is to define the Lagrange multipliers $\bm{\lambda} = \left[ \begin{matrix} \lambda_0 & \lambda_1 & \lambda_2 \end{matrix} \right]^{\top}$, all strictly positive, to construct the cost
\begin{equation}
    \label{eq:extended_cost}
    \mathcal{L}_{\bm{\lambda}} = \lambda_0 \mathcal{O}_s + \lambda_1 \mathcal{C}_{1s} + \lambda_2 \mathcal{C}_{2s}
\end{equation}
which integrates both the objective and the constraints.
Problem~\eqref{eq:sampled_optimization_problem} is then equivalent to solving 
\begin{equation} \label{eq:lagrange_sampled_optimization_problem}
    \argmin_{\substack{P \succ 0, \Theta, \\ \beta \neq 0, \gamma \neq 0}} \max_{\bm{\lambda} > 0} \mathcal{L}_{\bm{\lambda}}(\Psi, \partial \mathcal{D}_s, \mathcal{D}_s).
\end{equation}
Reformulating~\eqref{eq:sampled_optimization_problem} into~\eqref{eq:lagrange_sampled_optimization_problem} allows us to apply a primal-dual optimization algorithm to train the Taylor-neural Lyapunov function; the resulting algorithm is presented in the next section~\ref{subsec:training-algorithm}.

\begin{remark}\label{rem:lagrangian-formulation}
    We note that, since $\mathcal{C}_{1s}$ and $\mathcal{C}_{2s}$ are positive, ideally setting $\lambda_1 = \lambda_2 = \infty$ ensures that the constraints are satisfied, as it must be $\mathcal{C}_{1s} = \mathcal{C}_{2s} = 0$.
    On the other hand, we treat the objective $\mathcal{O}_s$ as a soft constraint, in the sense that we assign less weight (smaller $\lambda_0$) to it as compared to the constraints. In other words, we prioritize satisfying the constraints over achieving the optimality of the objective.
    Finally, we remark that if the optimal value of the cost is $0$ for any sampling, then condition~1) in Lemma~\ref{lem:equivalence_zubov} is satisfied and $\tilde{V}$ is a maximal Lyapunov function.
\end{remark}

\subsection{Training algorithm}\label{subsec:training-algorithm}
In this section, we describe the proposed training algorithm, tailored specifically for~\eqref{eq:sampled_optimization_problem}.
The foundation of the algorithm is the primal-dual algorithm introduced in section~\ref{subsec:algorithm-primaldual}, which is then combined with the projection routine of section~\ref{subsec:algorithm-projection}, that enforces certain properties of $P$.
Sections~\ref{subsec:algorithm-boundary} and \ref{subsec:algorithm-resampling} present custom routines to further improve the efficiency and robustness of the training algorithm. Section~\ref{subsec:full-algorithm} presents the final algorithm.

\subsubsection{Primal-dual algorithm}\label{subsec:algorithm-primaldual}
The foundation of our training algorithm is a primal-dual algorithm \cite{goemans1997primal}, which has been applied with good results to other physics-informed learning problems \cite{9683295,9993221,barreau2025accuracy}.
In particular, this approach has proven to enhance the robustness of the training \cite{barreau2025accuracy}, that is, to decrease sensitivity on the initialization, thus yielding more consistent results.

The algorithm is characterized by the alternation of a gradient descent step and a gradient ascent step applied on $\mathcal{L}_{\bm{\lambda}}$ w.r.t. $\Psi$ and $\bm{\lambda}$, respectively.
Formally, the algorithm is characterized by the updates, for $k \in \mathbb{N}$:
\begin{subequations}\label{eq:primal-dual}
    \begin{align}
        \Psi_{k+1} &= \Psi_k - \alpha_{\Psi} \nabla_{\Psi} \mathcal{L}_{\bm{\lambda}_k} (\Psi_k, \partial \mathcal{D}_s, \mathcal{D}_s) \label{eq:primal} \\
        \bm{\lambda}_{k+1} &= \bm{\lambda}_k + \alpha_{\lambda} \nabla_{\bm{\lambda}} \mathcal{L}_{\bm{\lambda}_k} (\Psi_{k+1}, \partial \mathcal{D}_s, \mathcal{D}_s) \label{eq:dual}
    \end{align}
\end{subequations}
where $\Psi_k = \left\{ P_k, \ \Theta_k, \ \gamma_k, \ \beta_k \right\}$, and $\bm{\lambda}_k = \left[ \begin{matrix} \lambda_{0,k} & \lambda_{1,k} & \lambda_{2,k} \end{matrix} \right]^{\top}$ are the primal and dual variables computed at time $k$, and $\alpha_\Psi$, $\alpha_{\lambda}$ are the primal and dual learning rates.
We initialize the primal variables $\Psi_0$ with random values. The dual variables can be similarly initialized at random, but Remark~\ref{rem:curriculum-learning} discusses a more effective alternative.

\begin{remark}\label{rem:curriculum-learning}
    In light of curriculum learning \cite{bengio2009curriculum},
    initializing the dual variables as $\bm{\lambda}_0 = \left[ \begin{matrix} 0 & 1 & 1 \end{matrix} \right]^{\top}$ can increase robustness. This is because the training algorithm initially prioritizes satisfying the constraints to minimizing the objective $\mathcal{O}_s$. Later, as the value of $\lambda_0$ increases (which is always the case by the definition of~\eqref{eq:extended_cost} and~\eqref{eq:dual}), more weight is given to the objective as well.
    We further remark that, in addition to this initialization, one can set a maximum value of $1$ for $\lambda_0$, to ensure that the objective's weight is always smaller than the constraints' weights.
\end{remark}

\begin{remark}
    We remark that our choice of primal-dual as the foundation of the training algorithm allows for improved performance as opposed to the following alternatives (see also \cite{barreau2025accuracy}).
    A first option might be to use projected gradient descent to solve~\eqref{eq:sampled_optimization_problem}. However, projecting onto the set of solutions satisfying $\mathcal{C}_{1s}$ and $\mathcal{C}_{2s}$ is challenging, due to the nonconvexity of this set.
    Alternatively, we could apply gradient descent to the extended cost~\eqref{eq:extended_cost} with fixed values of $\lambda_0$, $\lambda_1$, $\lambda_2$, which in this case serve as regularization weights. However, with this choice, the constraints are relaxed and will not be satisfied by the solution of the problem. On the other hand, primal-dual ensures that the constraints will be satisfied asymptotically.
\end{remark}

\subsubsection{Projection subroutine}\label{subsec:algorithm-projection}
One advantage of the proposed Taylor-neural Lyapunov functions lies in their explainability locally around the origin. Indeed, around the origin $\tilde{V}$ is approximately quadratic, $\tilde{V}(x) = x^{\top} (P + \gamma^2 I) x + o(\|x\|^2)$.
Now, since $P$ is one of the variables we need to train, we are interested in leveraging this fact to enhance the accuracy of the training algorithm.

We start by observing that, using Lemma~\ref{lem:lemma1}, $\tilde{V}$ should verify the following condition:
\begin{multline*}
    \exists \varepsilon > 0, \forall x \in \mathbb{R}^n, \quad \| x \|^2 < \varepsilon \Rightarrow \\  x^{\top} \left[ A^{\top} (P + \gamma^2 I) + (P + \gamma^2 I) A \right] x \leq 0.
\end{multline*}
In other words, the matrix $P$ we learn must belong to the positive cone
\[
\mathcal{C}(\gamma) = \left\{ P \in \mathbb{S}^n_+ \ | \ A^{\top} (P + \gamma^2 I) + (P + \gamma^2 I) A \prec 0 \right\},
\]
which can be checked by solving a semi-definite program (SDP).
However, after the primal update~\eqref{eq:primal}, $P_{k+1}$ does not necessarily belong to $\mathcal{C}(\gamma_{k+1})$, and therefore we apply a projection to enforce this property.

The projection of a matrix $P$ onto the positive cone $\mathcal{C}(\gamma)$, $\gamma \neq 0$, is defined as
\begin{equation}\label{eq:projection-definition} 
    \text{Proj}_{\mathcal{C}(\gamma)}(P) = \argmin_{\hat{P} \in \mathcal{C}(\gamma)} \| P - \hat{P} \|^2,
\end{equation}
where the norm $\| \cdot \|$ denotes the spectral radius.
However,~\eqref{eq:projection-definition} does not allow for an efficient solution, and in the following, we turn it into an SDP which can indeed be solved efficiently.
Using Schur's complement \cite[Section~2.1]{boyd1994linear} leads to the following equivalent formulation, for $\alpha > 0$:
\[
    \begin{array}{l}
        \left( P - \hat{P} \right)^{\top} \left( P - \hat{P} \right) \preceq \alpha I \quad  \Leftrightarrow \quad \quad \quad \quad \quad \quad \\
        \hfill M_{P}(\alpha, \hat{P}) = \left[ \begin{matrix}
            \alpha I &  P - \hat{P} \\
             P - \hat{P} & I
        \end{matrix} \right] \succeq 0,
    \end{array}
\]
where matrix $M_{P}$ is linear in each of its variables.
The projection can then be rewritten into the following linear matrix inequality problem:
\begin{equation} \label{eq:projection}
    \text{Proj}_{\mathcal{C}(\gamma)}(P) = \begin{array}[t]{cl}
        \displaystyle\argmin_{\hat{P} \in \mathcal{C}(\gamma)} \min_{\alpha} & \alpha \\
        \text{s.t.} & M_{P}(\alpha, \hat{P}) \succeq 0,
    \end{array}
\end{equation}
which corresponds to an SDP, and for which we have efficient off-the-shelf solvers.
Therefore, after the primal update~\eqref{eq:primal}, we can apply the projection~\eqref{eq:projection} to ensure that $\tilde{V}$ verifies Lemma~\ref{lem:lemma1} around the origin.

\subsubsection{Boundary estimation subroutine}\label{subsec:algorithm-boundary}
Recall from section~\ref{subsec:sampling-objective} that, in order to sample the objective and produce $\mathcal{O}_s$, we need to choose the $\eta_i$ parameters. In particular, given the samples on the boundary of $\mathcal{D}$, $\left\{y_i\right\}_i \subset \partial \mathcal{D}$, we need to determine $\left\{ \eta_i \right\}_i$ such that $\eta_i y_i \in \partial \mathcal{R}(\tilde{V})$.
To this end, we propose to use a gradient ascent approach, characterized by the update
\begin{equation}
    \label{eq:eta_update}
    \left\{
        \begin{array}{l}
            \eta_i(k+1) = \eta_i(k) + \alpha_{\eta}(k) g_{x_i}(\eta_i(k)) \\
            \eta_i(0) = 1
        \end{array}
    \right.
\end{equation}
where, for $\xi > 0$:
\begin{equation}
    g_x(\eta) = \left\{ \begin{array}{ll}
        1 - \tilde{V}(\eta x) & \text{ if } \eta x \in \mathcal{R}(\tilde{V}), \\
        - \xi \eta & \text{ otherwise.}
    \end{array}\right.
\end{equation}

\subsubsection{Resampling subroutine}\label{subsec:algorithm-resampling}
In Section~\ref{subsec:sampling-constraint-1}, we proposed to approximate constraint $\mathcal{C}_1$ with its sampled version $\mathcal{C}_{1s}$ (see~\eqref{eq:sampled_constraint}).
However, this is a good approximation only if the cardinality of the set of sampling points $\mathcal{R}_s$ is large.
On the other hand, utilizing a large number of sampling points might negatively affect the training time and efficiency of the solver \cite{munzer2022curriculum}.

To seek a balance in this accuracy-efficiency trade-off, we introduce a resampling routine for $\mathcal{R}_s$, see \textit{e.g.} \cite{daw2022mitigating,barreau2025accuracy}.
Resampling allows to employ fewer sampling points at each training epoch, yielding the following advantages:
\begin{enumerate}
    \item the computational burden is kept low,
    \item and each resampling will bring new gradient information for \eqref{eq:primal}, and thus prevent redundancy (see \cite[Section~8.1.3]{Goodfellow-et-al-2016}), guaranteeing good accuracy.
\end{enumerate}
In Algorithm~\ref{alg:training}, we apply a resampling every $N_{\lambda}$ training epochs. We remark that we do not resample $\partial \mathcal{D}_s$, only $\mathcal{D}_s$, and, in turn, $\mathcal{R}_s$. This is because each original point $y_i \in \partial \mathcal{D}_s$ is associated with a parameter $\eta_i$, and resampling would break this equivalence.

\subsubsection{Training algorithm}\label{subsec:full-algorithm}
We conclude this section by presenting in Algorithm~\ref{alg:training} the overall training algorithm.
\begin{algorithm}[!ht]
    \caption{Unsupervised training of a Taylor-neural Lyapunov function}
    \label{alg:training}
    \begin{algorithmic}
        \Require $N_{\textrm{epoch}}, N_{\lambda}, \alpha_\Psi, \alpha_{\lambda}, \alpha_{\eta}, \xi$
        \State $P_0, \Theta_0, \gamma_0, \beta_0 \gets I, \textrm{Xavier}(), 0.01, 1.0$
        \State $\lambda_0, \lambda_1, \lambda_2 \gets 0.0, 1.0, 1.0$
        \State Construct discrete sets $\mathcal{D}_s$ and $\partial \mathcal{D}_s$
        \For{$k = 1 \dots N_{\textrm{epoch}}$}

            \State Compute $P_{k+1}, \Theta_{k+1}, \gamma_{k+1}, \beta_{k+1}$ using \eqref{eq:primal} \Comment{$\tilde{V}_{k+1}$}
            \State Update $\eta_i(k+1)$ using \eqref{eq:eta_update}
            \State $P_{k+1} \gets \text{Proj}_{\mathcal{C}(\gamma_{k+1})}(P_{k+1})$ \Comment{Using the SDP \eqref{eq:projection}}
        
            \If{$k \text{ mod } N_{\lambda}$ is $0$}
                \State Compute $\lambda_{0,k+1}, \lambda_{1,k+1}, \lambda_{2,k+1}$ using \eqref{eq:dual}
                \State Resample on $\mathcal{D}$ to update $\mathcal{D}_s$
            \EndIf

            \If{Stopping criterion}
                \State \textbf{break}
            \EndIf
        \EndFor
        \Ensure{$P_k, \Theta_k, \gamma_k, \beta_k$}
    \end{algorithmic}
\end{algorithm}
The first step is to initialize the variables that need to be trained, $\Psi = \{ P, \Theta, \gamma, \beta\}$, with the parameters $\Theta$ of the neural network being initialized randomly according to the Xavier procedure.
Then, we initialize the dual variables $\lambda_0, \lambda_1, \lambda_2$, as inspired by curriculum learning, see Remark~\ref{rem:curriculum-learning}.
Finally, the algorithm selects the subsets of sampling points in the domain and its boundary, $\mathcal{D}_s$ and $\partial \mathcal{D}_s$.

The main iteration of the algorithm is then performed $N_{\textrm{epoch}}$ times, or until the stopping criterion is reached. The iteration consists of the primal update \eqref{eq:primal} (with step-size $\alpha_\Psi$) and the scaling parameter update \eqref{eq:eta_update} (with parameters $\alpha_{\eta}, \xi$), as well as the projection of $P_{k+1}$ (by solving the SDP~\eqref{eq:projection}). These updates, using~\eqref{eq:V_tilde}, yield a new estimate $\tilde{V}_{k+1}$ of the maximal Lyapunov function at every $k$.
Additionally, every $N_\lambda$ iterations, the dual update~\eqref{eq:dual} is performed (with learning rate $\alpha_\lambda$), as well as a resampling of $\mathcal{D}$ according to section~\ref{subsec:algorithm-resampling}.

\subsection{Convergence analysis}\label{subsec:convergence}
In this section, we propose an analysis of the convergence properties of the primal-dual training algorithm, as well as the crucial boundary estimation subroutine, justifying our algorithm design.

Using \cite[Theorem~2.2]{daskalakis2018limit}, the primal-dual formulation can be written as a game and, under some conditions on the learning rate, the algorithm is ensured to converge almost surely to a solution such that $\mathcal{C}_{1s}(\Psi^*, \mathcal{D}_s) = 0$ provided infinite-width neural network. Even if this is the limit case, this ensures that the primal-dual is an asymptotically correct procedure with $\mathcal{C}_{1s}$ and $\mathcal{C}_{2s}$ left as hard constraints while the maximization of the basin of attraction is left as a soft constraint.

Concerning the second constraint $\mathcal{C}_{2s}$, we first need to consider the boundary estimation sub-routine. We start by showing in the following lemma that, for a suitable choice of learning rate $\alpha_\eta$, we can approximate the boundary of $\mathcal{R}(\tilde{V})$ arbitrarily close.

\begin{lemma} \label{lem:eta}
    Let $\xi > 0$, $\alpha_{\eta} \in (0, \bar{\alpha}]$ be such that $\xi \bar{\alpha} \in (0, 1)$.
    If $\mathcal{R}(\tilde{V})$ is a strict star domain at the origin, there exists a unique $\delta_i > 0$ such that $\delta_i x_i \in \partial \mathcal{R}(\tilde{V}) \setminus \partial \mathcal{D}$ and the following holds: 
    \[
     \exists K > 0, \ \forall k > K, \quad \frac{\eta_i(k) - \delta_i}{\bar{\alpha}} \in [- \xi \delta_i, 1].
    \]
    where $\eta_i(k)$ is the output of~ \eqref{eq:eta_update}.
\end{lemma}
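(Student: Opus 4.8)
The plan is to recast the update \eqref{eq:eta_update} as a scalar discrete-time dynamical system along the fixed ray $\mathbb{R}_{\ge 0}\, x_i$ and to exhibit a forward-invariant interval around $\delta_i$ that is reached in finite time. Write $h(\eta) = \tilde{V}(\eta x_i)$ for $\eta \in [0,1]$. First I would settle existence and uniqueness of $\delta_i$: since $\mathcal{R}(\tilde{V})$ is a strict star domain at the origin, its intersection with the ray is a single segment emanating from the origin, so $\{\eta \in [0,1] : h(\eta) < 1\} = [0, \delta_i)$ for a unique $\delta_i$, with $h(\delta_i) = 1$ and $\delta_i x_i \in \partial\mathcal{R}(\tilde{V}) \setminus \partial\mathcal{D}$ (the crossing occurs strictly inside $\mathcal{D}$, so $\delta_i < 1$). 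On this segment $\tilde{V}$ is nonnegative and bounded by $1$, hence the first branch of $g$ satisfies $g_{x_i}(\eta) = 1 - h(\eta) \in (0,1]$ whenever $\eta < \delta_i$, while for $\eta \ge \delta_i$ the second branch gives $g_{x_i}(\eta) = -\xi\eta < 0$.

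Next I would prove that the interval $I = [\delta_i(1-\bar{\alpha}\xi),\, \delta_i + \bar{\alpha}]$ is forward invariant for \eqref{eq:eta_update}, treating the two regimes separately. If $\eta_i(k) < \delta_i$ (inside $\mathcal{R}(\tilde{V})$), the update is additive, $\eta_i(k+1) = \eta_i(k) + \alpha_\eta(1 - h(\eta_i(k)))$; since $1 - h \in (0,1]$ and $\alpha_\eta \le \bar{\alpha}$, the increment lies in $(0, \bar{\alpha}]$, so $\eta_i(k+1) > \eta_i(k) \ge \delta_i(1-\bar{\alpha}\xi)$ and $\eta_i(k+1) < \delta_i + \bar{\alpha}$. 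If instead $\eta_i(k) \ge \delta_i$ (outside), the update is multiplicative, $\eta_i(k+1) = \eta_i(k)(1 - \alpha_\eta\xi)$, with contraction factor $1 - \alpha_\eta\xi \in (1-\bar{\alpha}\xi, 1)$ by the hypothesis $\xi\bar{\alpha} \in (0,1)$; hence $\eta_i(k+1) < \eta_i(k) \le \delta_i + \bar{\alpha}$ and $\eta_i(k+1) \ge \delta_i(1-\alpha_\eta\xi) \ge \delta_i(1-\bar{\alpha}\xi)$. In both cases $\eta_i(k+1) \in I$, so the band is invariant; the asymmetry of $I$ records precisely the additive overshoot $\bar{\alpha}$ from below and the multiplicative undershoot $\delta_i\bar{\alpha}\xi$ from above.

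Finally I would show finite-time entry into $I$ from $\eta_i(0) = 1$. Because $\delta_i < 1$, the initial iterate lies above $\delta_i$; if $1 \le \delta_i + \bar{\alpha}$ it already belongs to $I$ and one may take $K = 0$. Otherwise $\eta_i(0) > \delta_i + \bar{\alpha}$, so the iterate stays outside $\mathcal{R}(\tilde{V})$ and contracts geometrically by the fixed factor $1 - \alpha_\eta\xi < 1$, hence drops below $\delta_i + \bar{\alpha}$ after finitely many steps. It remains to check it does not skip past $I$ on this first descent: at the last index $k$ with $\eta_i(k) > \delta_i + \bar{\alpha}$ one has $\eta_i(k+1) = \eta_i(k)(1-\alpha_\eta\xi) > (\delta_i+\bar{\alpha})(1-\alpha_\eta\xi)$, and a short computation using $\alpha_\eta \le \bar{\alpha}$ shows $(\delta_i+\bar{\alpha})(1-\alpha_\eta\xi) \ge \delta_i(1-\bar{\alpha}\xi)$, so $\eta_i(k+1) \in I$. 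Combining this with the invariance proved above yields a $K$ with $\eta_i(k) \in I$ for all $k > K$, which is exactly $\frac{\eta_i(k)-\delta_i}{\bar{\alpha}} \in [-\xi\delta_i, 1]$.

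The main obstacle I anticipate is the bookkeeping at the boundary $\eta = \delta_i$, where the field $g_{x_i}$ is discontinuous: I must ensure that the additive step from just below $\delta_i$ cannot cross the upper edge of $I$ and that the multiplicative step from just above $\delta_i$ cannot fall below the lower edge, which is what pins down the exact endpoints $\delta_i + \bar{\alpha}$ and $\delta_i(1-\bar{\alpha}\xi)$. A secondary point requiring care is justifying $0 \le h < 1$ on $[0,\delta_i)$ — i.e.\ nonnegativity of $\tilde{V}$ on $\mathcal{R}(\tilde{V})$ — which guarantees $1 - h \le 1$ and hence the step bound $\bar{\alpha}$; this relies on $\tilde{V}$ behaving as a positive Lyapunov candidate along the ray, consistent with the strict star-domain hypothesis.
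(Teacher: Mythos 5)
Your proposal is correct and follows essentially the same route as the paper's proof: identify the unique crossing $\delta_i$ on the ray, observe that below $\delta_i$ the update is an additive increase bounded by $\bar{\alpha}$ while above $\delta_i$ it is a geometric contraction with factor $1-\alpha_\eta\xi$, and conclude that the iterate enters and stays in $[(1-\xi\bar{\alpha})\delta_i,\ \delta_i+\bar{\alpha}]$. Your version is slightly more careful than the paper's (you explicitly derive existence/uniqueness of $\delta_i$ from the strict star-domain hypothesis and verify the no-overshoot step on the first descent, which the paper only asserts), but the underlying argument is the same.
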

\begin{proof}
    See Appendix~\ref{app:eta}.
\end{proof}

\smallskip

Additionally, in some specific cases we can also prove that $\lim_{k \to \infty} \tilde{V} (\eta_i(k) x_i) = 1$.

\begin{proposition}
    Consider the conditions of Lemma~\ref{lem:eta}, ad further assume that 
    \begin{enumerate}
        \item $\frac{\partial \tilde{V}}{\partial x_i}(\delta_i) \neq 0$;
        \item $\alpha_{\eta}$ is strictly decreasing with $\lim_{k \to \infty} \alpha_{\eta}(k) = 0$ and $\sum_k \alpha_{\eta}(k)$ is diverging.
    \end{enumerate}
    Then the following holds: $\lim_{k \to \infty} \eta_i(k) = \delta_i$.
\end{proposition}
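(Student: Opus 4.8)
The plan is to recognize \eqref{eq:eta_update} as a one-dimensional deterministic stochastic-approximation recursion and to show that it converges to the radial crossing point $\delta_i$ supplied by Lemma~\ref{lem:eta}. Fix the sample $x_i$ and abbreviate $\varphi(\eta) = \tilde{V}(\eta x_i)$, $\eta(k) = \eta_i(k)$, and $\delta = \delta_i$. Since $\delta x_i \in \partial \mathcal{R}(\tilde{V}) \setminus \partial\mathcal{D}$ we have $\varphi(\delta) = 1$, and because $\mathcal{R}(\tilde{V})$ is a strict star domain the ray cannot re-enter $\mathcal{R}(\tilde{V})$ once it has left, so $\varphi(\eta) < 1$ for $\eta < \delta$ and $\varphi(\eta) \ge 1$ for $\eta \ge \delta$. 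Consequently the driving term $g_{x_i}$ has a clean sign structure around $\delta$: for $\eta < \delta$ it equals $1 - \varphi(\eta) > 0$ (pushing $\eta$ up), while for $\eta \ge \delta$ it switches to the branch $-\xi\eta < 0$ (pushing $\eta$ down). Assumption~(1), which I read as $\nabla\tilde{V}(\delta x_i)\cdot x_i = \varphi'(\delta) \neq 0$, guarantees that this crossing is transversal, so the sign change is genuine and not a tangential touch of the level set $\{\tilde{V}=1\}$. The key difficulty, flagged below, is that $g_{x_i}$ is discontinuous at $\delta$ (its left limit is $0$ but its right limit is $-\xi\delta < 0$), so $\delta$ behaves like a sliding, attracting point rather than a smooth equilibrium.

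I would then prove $\lim_k \eta(k) = \delta$ by squeezing the $\limsup$ and the $\liminf$. For the upper estimate, a step from $\eta(k) < \delta$ satisfies $\eta(k+1) = \eta(k) + \alpha_\eta(k)\bigl(1-\varphi(\eta(k))\bigr) < \delta + \alpha_\eta(k)$, since $0 < 1-\varphi \le 1$; and whenever $\eta(k) \ge \delta$ the recursion sits in the branch $-\xi\eta$, so $\eta(k+1) = \eta(k)\bigl(1 - \xi\alpha_\eta(k)\bigr) < \eta(k)$ (using $\xi\alpha_\eta(k) \le \xi\bar\alpha < 1$ from Lemma~\ref{lem:eta}), i.e. the iterate strictly decreases while it stays above $\delta$. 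Thus any excursion above $\delta$ originates from a crossing out of $\{\eta<\delta\}$ at some time $m$ and is bounded by $\delta + \alpha_\eta(m-1)$ thereafter; since $\alpha_\eta(k) \to 0$ and the most recent crossing time tends to infinity, $\limsup_k \eta(k) \le \delta$.

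For the lower estimate I would argue by contradiction: if $\liminf_k \eta(k) = \delta - c$ with $c > 0$, then $\eta(k) \le \delta - c/2$ infinitely often. On the compact interval confining the iterates (Lemma~\ref{lem:eta}) intersected with $\{\eta \le \delta - c/2\}$, the continuous positive function $1 - \varphi$ attains a minimum $m_c > 0$, so each such step raises $\eta$ by at least $\alpha_\eta(k)\, m_c$. Because $\sum_k \alpha_\eta(k) = \infty$, the iterate cannot stay below $\delta - c/2$ indefinitely; and once $k$ is large enough that $\alpha_\eta(k)$ is small, the set $\{\eta \ge \delta - c/2\}$ is forward invariant (a step from $[\delta - c/2,\delta)$ only increases $\eta$, while a step from $[\delta, \delta + \alpha_\eta(k)]$ lands no lower than $\delta\bigl(1-\xi\alpha_\eta(k)\bigr) > \delta - c/2$). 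Hence $\eta(k) \ge \delta - c/2$ eventually, contradicting $\liminf = \delta - c$; therefore $\liminf_k \eta(k) \ge \delta$. Combining the two bounds yields $\lim_k \eta(k) = \delta = \delta_i$.

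The main obstacle is precisely the discontinuity of $g_{x_i}$ at $\delta_i$: it rules out any naive contraction or smooth-Lyapunov argument and produces chattering of the iterates across the boundary $\{\tilde{V}=1\}$. The two conditions of the Proposition are exactly what tame it — $\alpha_\eta(k)\to 0$ forces the amplitude of each overshoot and of each restoring $-\xi\eta$ kick to vanish, collapsing the chattering band onto $\delta_i$, while the divergence $\sum_k \alpha_\eta(k) = \infty$ prevents the iterate from stalling at any point strictly below $\delta_i$. I would close by remarking that $\lim_k \tilde{V}(\eta_i(k)x_i) = \tilde{V}(\delta_i x_i) = 1$ then follows by continuity of $\tilde{V}$, recovering the weaker claim stated just before the Proposition.
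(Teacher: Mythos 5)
Your proof is correct, and it takes a noticeably different (and more self-contained) route than the paper's. The paper's argument is a two-line bootstrap of Lemma~\ref{lem:eta}: it uses the divergence of $\sum_k \alpha_\eta(k)$ together with assumption~(1) to guarantee that the iterate crosses above $\delta_i$ again at some $K_2 > K$, then re-applies Lemma~\ref{lem:eta} with the (now smaller) step-size bound $\bar\alpha$, so that the confinement interval $[(1-\xi\bar\alpha)\delta_i,\ \delta_i+\bar\alpha]$ shrinks onto $\delta_i$ as $\alpha_\eta \to 0$. You instead carry out a direct $\limsup$/$\liminf$ squeeze on the scalar recursion: overshoots above $\delta_i$ are bounded by $\delta_i + \alpha_\eta(\cdot)$ at the most recent crossing and the $-\xi\eta$ branch is strictly decreasing, giving $\limsup \le \delta_i$; the divergence of the step sizes plus a uniform positive lower bound on $1-\varphi$ away from $\delta_i$ rules out $\liminf < \delta_i$. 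Both proofs rest on the same two mechanisms (vanishing steps control the overshoot band; divergence of the series prevents stalling below $\delta_i$), but your version buys two things: it is fully rigorous where the paper's re-application of Lemma~\ref{lem:eta} with a time-varying $\bar\alpha$ is somewhat informal, and it never actually invokes the transversality assumption~(1) — your argument only needs $1-\varphi>0$ strictly on compact subsets of $(0,\delta_i)$, which follows from the strict star-domain hypothesis, so it suggests the proposition holds even at a tangential crossing. The paper's version, in exchange, is shorter and reuses the already-established lemma. One small point to make explicit in your $\limsup$ step: if there are only finitely many upward crossings, the iterate is eventually confined to $\{\eta<\delta_i\}$ (the $-\xi\eta$ branch cannot persist forever since $\prod_k(1-\xi\alpha_\eta(k))\to 0$), so the bound $\limsup\le\delta_i$ holds trivially in that case.
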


\begin{proof}
    Lemma~\ref{lem:eta} applies, but because of the divergence of the series $\sum \alpha_{\eta}$ together with 1), there will be $K_2 > K$ such that $\eta_i(K_2) > \delta_i$. We can apply Lemma~\ref{lem:eta} again, but $\bar{\alpha}$ has decreased and consequently the convergence interval~\eqref{eq:interval_eta} is tighter around $\delta_i$. Since $\alpha_{\eta}$ strictly decreases to $0$, then $\eta_i$ can be as close as desired to $\delta_i$. 
\end{proof}

If either one of these two lemmas holds on a star region of attraction, then we asymptotically get the convergence of the boundary estimation to the real boundary. This fact, together with the convergence property of the primal-dual algorithm, ensures that $\mathcal{C}_{2s}(\Psi^*, \partial\mathcal{D}_s) = 0$ if we have infinite-width neural networks. However, practically, we have no certificate to ensure that the learning rates are small enough, that the neural networks are sufficiently deep and wide, and that the loss function behaves correctly. This implies that the numerical solution might not be optimal in some cases.

\section{Supervised Learning of a Maximal Taylor-Neural Lyapunov Function}\label{sec:supervised}
The previous section provided a tailored approach to training a maximal Lyapunov function. This approach is \textit{unsupervised}, in the sense that it does not require data from a simulator of the dynamical system. Being unsupervised guarantees a wider applicability, especially in scenarios where data from the system cannot be collected. But it also has the drawback that the learning problem is more complicated, and the lack of data might result in the training algorithm getting stuck in a local minimum, resulting in a conservative estimate of the region of attraction.
Therefore, in this section, we provide a \textit{supervised} alternative, which integrates data from the system.

\subsection{Data generation}\label{subsec:supervised-data}
We start by discussing how data can be generated for the supervised learning procedure.
By the results in \cite{vannelli1985maximal}, a maximal Lyapunov function for system~\eqref{eq:dynamical_system} is characterized by
\begin{equation} \label{eq:maximal_lyap_function}
    V_m(x) = \int_0^{\infty} \| \phi(x, t) \| dt
\end{equation}
where recall that $\phi(x, t)$ is the solution to \eqref{eq:dynamical_system} with initial condition $x(0) = x_0$ evaluated at time $t$. In most cases, this is an improper integral and we define as $V_m(x) = \infty$ if the integral does not converge.
The corresponding region of attraction is then characterized as $\mathcal{R}^* = \left\{ x \in \mathbb{R}^n \ | \ V_m(x) < \infty \right\}$. Restrictions to a set $\mathcal{D}$ can be made by setting the integral to infinity if the state leaves $\mathcal{D}$.

As explained in \cite{liu2023physics}, it is possible to relate $V_m$ and $\tilde{V}$ by considering $\tilde{V}(x) = \tanh \left( \epsilon V_m(x) \right)$ for $x \in \mathcal{D}$ for any $\epsilon > 0$.
Using this observation, we can then generate datapoints as follows. First, we approximate $V_m$ by numerically solving the following system:
\begin{equation} \label{eq:discretized_V_m}
    \dot{x}(t) = f(x(t)), \quad \dot{V}_d(t) = \| x(t) \|, \quad x(0) = x_0.
\end{equation}
In the case $T$ large enough, then $V_d(T) \simeq V_m(x_0)$.
Let $\mathcal{D}_s^{\mathrm{data}}$ be a sampling of $\mathcal{D}$. Using~\eqref{eq:discretized_V_m} then we get the dataset $\{x, \tilde{V}^d_m(T)\}_{x \in \mathcal{D}_s^{\mathrm{data}}}$, to serve as training data.

However, it is impossible to arbitrarily approximate $V_m$ since it is an unbounded function. Instead, we can consider to approximate $W_{\epsilon}(x) = \tanh(\epsilon V_m(x))$ for $\epsilon > 0$. As shown in \cite{liu2023physics}, this function is positive definite if $\epsilon > 0$, and the following holds:
\[
    \frac{\partial W_{\epsilon}}{\partial x}(x) \cdot f(x) = - \epsilon (1 + W_{\epsilon}(x)) (1 - W_{\epsilon}(x)) \| x \|^2
\]
This new Lyapunov function meets Zubov's criteria \eqref{eq:zubov_equality}, which means that the region of attraction is defined by the level set $W_{\epsilon} < 1$.

However, the approximation of $W_{\epsilon}$ might not be entirely reliable, due to numerical issues; some examples are the discretization, maximal time $T$, the fact that the maximal region of attraction might not be strictly included in $\mathcal{D}$, or floating point precision.
Therefore, in the next section, we show how to incorporate data without over-relying on it.

\subsection{Reformulation of the learning problem}\label{subsec:supervised-problem}
The idea is to incorporate an additional term in the objective of the sampled optimization problem~\eqref{eq:sampled_optimization_problem}. This objective is defined as the following mean square error:
\begin{equation*}
    \mathcal{O}_{\mathrm{data}}(\Psi, \mathcal{D}_s^{\mathrm{data}}, \epsilon) \triangleq
    \frac{1}{| \mathcal{D}_s^{\mathrm{data}} |} \sum_{x \in \mathcal{D}_s^{\mathrm{data}}} \left( \tilde{V}(x) - W_{\epsilon}(x) \right)^2.
\end{equation*}
The resulting reformulated problem is:
\begin{equation*}
    \begin{array}[t]{cl}
         \displaystyle \argmin_{\substack{P \succ 0, \Theta, \\ \beta \neq 0, \gamma \neq 0, \epsilon > 0}} & \displaystyle \lambda_{\mathrm{data}} \mathcal{O}_{\mathrm{data}}(\Psi, \mathcal{D}_s^{\mathrm{data}}, \epsilon) + \mathcal{O}_s(\Psi, \partial \mathcal{D}_s) + \lambda_{\epsilon} \epsilon^2 \\
         \text{s.t.} & \mathcal{C}_{1s}(\Psi, \mathcal{D}_s) = 0, \\
         & \mathcal{C}_{2s}(\Psi, \partial \mathcal{D}_s) = 0,
    \end{array}
\end{equation*}
where $\lambda_{\mathrm{data}}$ is the weight assigned to the data-based cost and $\lambda_{\epsilon}$ is a regularization to ensure that $\epsilon$ is kept small. The lesser the reliability of the data is (\textit{e.g.} because a large discretization step is used), the smaller $\lambda_{\mathrm{data}}$ should be, to avoid relying excessively on misleading data.
The Lagrangian formulation of the problem above is then given by:
\begin{equation*} 
    \argmin_{\substack{P \succ 0, \Theta, \\ \beta \neq 0, \gamma \neq 0,\\ \epsilon > 0}} \max_{\lambda} \mathcal{L}_{\lambda_{\mathrm{data}},\bm{\lambda},\lambda_{\epsilon}}(\bm{\Psi}, \mathcal{D}_s^{\mathrm{data}}, \bm{\epsilon}, \partial \mathcal{D}_s, \mathcal{D}_s).
\end{equation*}

where the augmented Lagrangian is defined as
\begin{multline*}
    \mathcal{L}_{\lambda_{\mathrm{data}},\lambda,\lambda_{\epsilon}}(\Psi, \mathcal{D}_s^{\mathrm{data}}, \epsilon, \partial \mathcal{D}_s, \mathcal{D}_s) = \lambda_{\mathrm{data}} \mathcal{O}_{\mathrm{data}}(\Psi, \mathcal{D}_s^{\mathrm{data}}, \epsilon) \\
    + \mathcal{L}_{\bm{\lambda}}(\Psi, \partial \mathcal{D}_s, \mathcal{D}_s) + \lambda_{\epsilon} \epsilon^2.
\end{multline*}

\subsection{Training algorithm}\label{subsec:supervised-training}
Finally, we discuss how to modify the training algorithm of section~\ref{subsec:training-algorithm} to solve the supervised problem defined above.
The primal-dual updates at the foundation of the algorithm can be modified by including a primal update for $\epsilon$, as well as a dual update for $\lambda_{\mathrm{data}}$:
\begin{subequations}
    \begin{align}
        &\Psi_{k+1} = \Psi_k - \alpha_{\Psi} \nabla_{\Psi} \Big( \lambda_{\mathrm{data}}(k) \mathcal{O}_{\mathrm{data}}(\Psi, \mathcal{D}_s^{\mathrm{data}}, \epsilon_k) \\ & \hspace{4cm}+ \mathcal{L}_{\bm{\lambda}_k} (\Psi_k, \partial \mathcal{D}_s, \mathcal{D}_s) \Big) \notag \\
        &\epsilon_{k+1} = \epsilon_k - \alpha_\epsilon \nabla_{\epsilon} \mathcal{O}_{\mathrm{data}}(\Psi_k, \mathcal{D}_s^{\mathrm{data}}, \epsilon_k) \\
        &\bm{\lambda}_{k+1} = \bm{\lambda}_k + \alpha_{\lambda} \nabla_{\bm{\lambda}} \mathcal{L}_{\bm{\lambda}_k} (\Psi_{k+1}, \partial \mathcal{D}_s, \mathcal{D}_s) \\
        &\lambda_{\mathrm{data}}(k+1) = \exp\left(- \sum_{i = 0}^2 \lambda_i(k+1) / 3 \right), \label{eq:lambda-data-update}
    \end{align}
\end{subequations}
where $\alpha_\epsilon$ is a suitable step-size.
The choice of~\eqref{eq:lambda-data-update} to update $\lambda_{\mathrm{data}}$ is because we consider the data-based objective less reliable than the unsupervised objective. This ensures that in case of conflict between the data cost $\mathcal{O}_{\mathrm{data}}$ and the other costs $\mathcal{O}_s, \mathcal{C}_{1s}$ and $\mathcal{C}_{2s}$, the latter will be given priority.

\begin{remark}
    Compared to \cite{liu2023physics}, $\epsilon$ is left as a learnable parameter. Its value is crucial for a correct estimation of the region of attraction, since a large value implies arithmetic overflow of the hyperbolic tangent, which then saturates to $1$ even if $V_m$ is finite.
\end{remark}

The approach proposed in this section allowed us to incorporate (with limited reliance) data in the learning problem to train a maximal Lyapunov function.
In section~\ref{sec:numerical} we evaluate the performance of the resulting supervised algorithm, and compare it with the unsupervised version of section~\ref{sec:unsupervised}.

\section{Simulations and discussion}\label{sec:numerical}

In this section, we present and discuss the results of applying our proposed methods to different systems, both globally and locally stable, and with different dimensions. In addition, we compare our solution with state-of-the-art alternatives.
To highlight the robustness of the proposed methods, we chose the same hyperparameters for Algorithm~\ref{alg:training} throughout this section. These parameters and more details can be found in the repository \href{https://github.com/mBarreau/TaylorLyapunov}{TaylorLyapunov}.

Since we did not use a validation software as in LyzNet \cite{liu2023physics}, we instead sampled many points in $\mathcal{D}$ and rescaled the Lyapunov function such that $DV_0 < 0$ on the set $V < 1$. This rescaling should enable fair comparisons with other methods. We evaluated the region of attraction coverage by sampling points $\mathcal{D}_s = \{ x_i \}_i$ uniformly in $\mathcal{D}$, and, using the maximal Lyapunov function $V_m$ defined in \eqref{eq:maximal_lyap_function}, we fixed a threshold $M$ large and constructed a sampling of the region of attraction as $\mathcal{R}_s = \{ x \in \mathcal{D}_s \ | \ V_m(x) < M \}$. A coverage of the region of attraction is the percentage of points in $\mathcal{R}_s$ such that $\tilde{V} < 1$. Note that the domain considered in the examples is not the unit square, but we scaled the state variable for the computation, and it was scaled back for the plotting.

\subsection{Globally stable system with quadratic Lyapunov function}

We consider first the following system:
\begin{equation} \label{eq:globally_stable}
    \left\{
        \begin{array}{l}
             \dot{x}_1(t) = -3 x_1(t) + 0.1 \sin(x_2) x_2, \\
             \dot{x}_2(t) = -15 x_2(t).
        \end{array}
    \right.
\end{equation}
We can rewrite it as $\dot{x} = A(x) x$ where $A$ belongs to the polytope $[A_{-1}, A_1]$ with $A_x = \left[ \begin{matrix}
    -3 & 0.1 x \\ 0 & - 15
\end{matrix} \right]$. This system is globally stable because there exists a common quadratic Lyapunov function to all $A \in [A_{-1}, A_1]$: 
\[
    V_{quad}(x) = x^{\top} \left[ \begin{matrix}
        2.5 & 0.55 \\ 0.55 & 0.4
    \end{matrix} \right] x.
\]

We use the method described in this paper with $4,000$ epochs in the unsupervised case and $2,000$ in the supervised one. The obtained Lyapunov function has a ROA coverage of $99.9\%$ in both cases. This highlights that it is possible to find regions of attraction that are the whole domain $\mathcal{D}$. This was an issue present in the works \cite{liu2023physics,gaby2022lyapunovnet}.

\subsection{Globally stable system with a non-quadratic Lyapunov function}

Consider the following system:
\begin{equation} \label{eq:globally_stable2}
    \left\{
        \begin{array}{l}
             \dot{x}_1(t) = - x_1(t) + x_1 x_2, \\
             \dot{x}_2(t) = - x_2(t).
        \end{array}
    \right.
\end{equation}

The maximal Lyapunov function for this system is not quadratic and is therefore more challenging to approximate \cite{ahmadi2011globally}. Note as well that, no matter the size of the square $\mathcal{D}$, the domain will not be forward invariant, which is also restricting the estimate of the region of attraction. 

\begin{figure*}
    \centering
    \subfloat[Lyapunov function]{\includegraphics[width=0.45\textwidth]{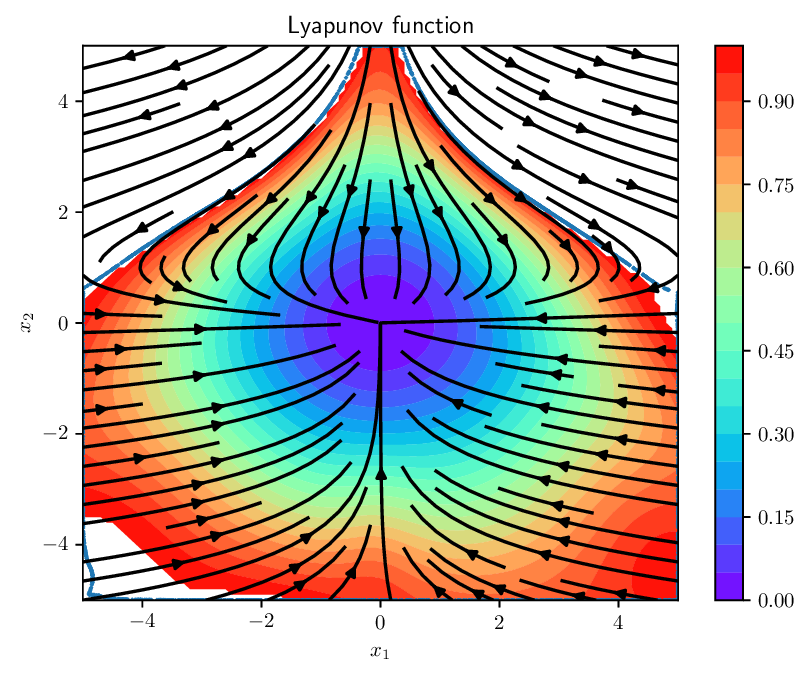}}
    \hfil
    \subfloat[Orbital derivative $DV_0$]{\includegraphics[width=0.45\textwidth]{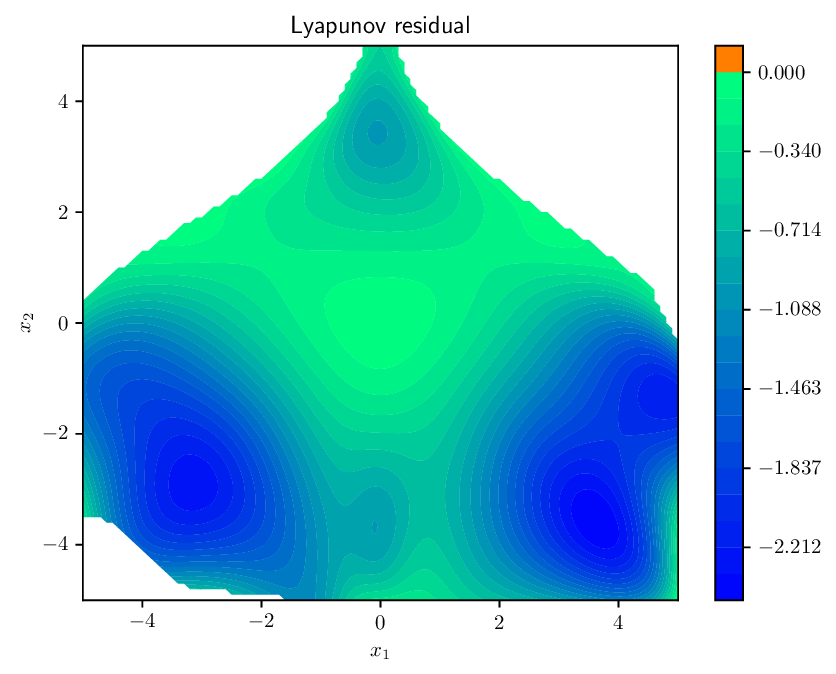}}
    \caption{Unsupervised Taylor-neural Lyapunov function for globally stable system~\eqref{eq:globally_stable2}. The color scale represents the sub-level sets of the Lyapunov function. The estimated region of attraction is colored. Blue dots refer to the sampling points at the boundary $\{\eta_i x_i\}_i$. Arrows indicate the flow of the original system.}
    \label{fig:globally_stable2}
\end{figure*}

The method presented in this paper manages to learn a basin of attraction which covers $97.2\%$ and $91.5\%$ of the region of attraction in the supervised and unsupervised settings, respectively. These high numbers suggest that the neural network residuals are learned correctly, enabling a high-order Lyapunov function. In Fig.~\ref{fig:globally_stable2}, one can see that the rescaling has affected the result of the unsupervised learning, and the bottom left corner is not well covered. We can also notice that the procedure has learned a forward-invariant set since the top corners are not part of the basin of attraction, as expected.

\subsection{Locally stable equilibrium point} 

The third system considered is the model of a generator \cite[Example~11.2]{glad2018control}, described as follows:
\begin{equation} \label{eq:locally_stable}
    \left\{
        \begin{array}{l}
             \dot{x}_1(t) = x_2(t), \\
             \dot{x}_2(t) = -\sin(x_1(t)) - 5 x_2(t).
        \end{array}
    \right.
\end{equation}

Since there are multiple equilibrium points, it is well known that this system is not globally stable. The system is locally stable because it satisfies Assumption~\ref{ass:linearization}. In \cite[Example~12.6]{glad2018control}, they provide the Lyapunov function $V_{loc}(x_1, x_2) = 0.5 x_2^2 + 1 - \cos(x_1)$ which gives a rather conservative basin of attraction. 

The result in the supervised case is displayed in Fig.~\ref{fig:locally_stable}. We can see that the blue dots, which correspond to the boundary estimate $\{ \eta_i x_i \}_i$ are very close to the real boundary, which is a success ($92\%$ coverage). From the flow arrows, it seems that the estimated region of attraction is very close to the real one (and much larger than the one obtained using $V_{loc}$). We can see that the level lines are not elliptical, which implies that the Lyapunov function is not quadratic. This indicates that higher-order terms in the Taylor decomposition have been learned successfully. However, the inflow into the basin of attraction is very localized around $x_2 = 0$, which means that it is challenging for the optimizer to estimate the region of attraction in the unsupervised case, explaining the $77\%$ coverage. During the training, the loss was continuously decreasing, indicating that the basin of attraction was getting closer to the region of attraction, but very slowly.

\begin{figure}
    \centering
    \includegraphics[width=0.45\textwidth]{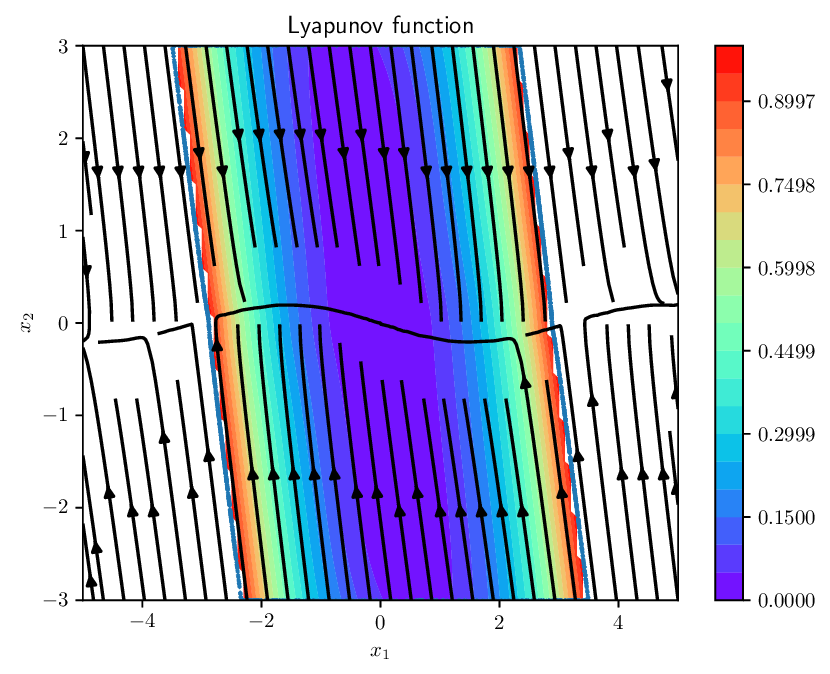}
    \caption{Supervised Taylor-neural Lyapunov function for locally stable system~\eqref{eq:locally_stable}. The color scale represents the sub-level sets of the Lyapunov function. The estimated region of attraction is the colored area. Blue dots refer to the sampling points at the boundary $\{\eta_i x_i\}_i$. Arrows indicate the flow of the original system.}
    \label{fig:locally_stable}
\end{figure}

\subsection{Van der Pol oscillator} The last 2D-example considers a special case of Van der Pol oscillator \cite{van1920theory}:
\begin{equation} \label{eq:vanderpol}
    \left\{
        \begin{array}{l}
             \dot{x}_1(t) = -x_2(t), \\
             \dot{x}_2(t) = x_1(t) - \mu(1 - x_1(t)^2) x_2(t).
        \end{array}
    \right.
\end{equation}
We investigate the case $\mu = 1$. This system has a polynomial structure, which makes the use of SOS Lyapunov functions possible \cite{henrion2013convex}. The region of attraction has a nonconvex shape, which becomes stiffer as $\mu$ increases. 

The result of the unsupervised method is shown in Fig.~\ref{fig:vanderpol}. Similarly to the previous example, the region of attraction is well-estimated. The obtained result surpasses the SOS result (dashed line in the figure). 

\begin{figure}
    \centering
    \includegraphics[width=0.45\textwidth]{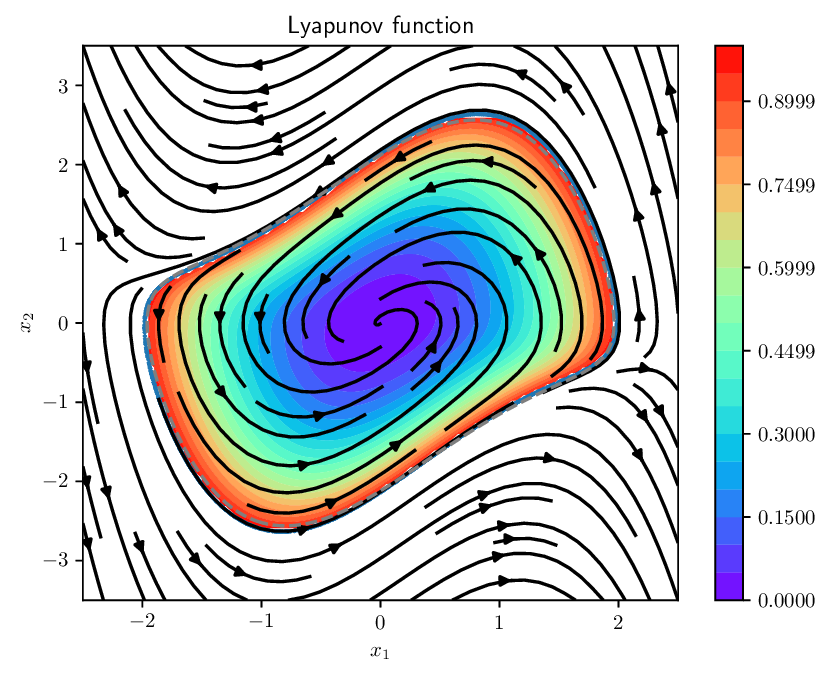}
    \caption{Taylor-neural Lyapunov function for Van der Pol oscillator with $\mu = 1$. The color scale represents the sub-level sets of the Lyapunov function. The estimated region of attraction is the colored area. Blue dots refer to the sampling points at the boundary $\{\eta_i x_i\}_i$. Arrows indicate the flow of the original system. Dash-line is the estimated region of attraction obtained using SOS.}
    \label{fig:vanderpol}
\end{figure}

\begin{table}
    \centering
    \caption{Performance comparison of different algorithms on the Van der Pol oscillator with $\mu = 1$.}
    \begin{tabular}{l|ccc}
        Method & Learning type & RoA coverage & System dimensions \\
        \hline
        LyzNet \cite{liu2023physics} & Supervised & $95.64\%$ & \textbf{High} \\
        SOS \cite{henrion2013convex} & Unsupervised & $94.17\%$ & Low \\
        \multirow{2}{*}{This method} & Unsupervised & $\mathbf{97.6\%}$ & Intermediate \\
                                     & Supervised & $97.3\%$ & Intermediate \\
    \end{tabular}
    \label{tab:comparison}
\end{table}

We compare our results with SOS \cite{henrion2013convex} and the LyzNet method developed in \cite{liu2023physics}, and the results are presented in Table~\ref{tab:comparison}. We found that the best-performing method for ROA coverage in this example is ours. SoS and LyzNet both perform similarly around $95\%$ of coverage. Our method still gives a region of attraction closer to the optimal one ($97\%$) in an unsupervised setting. Moreover, supervised techniques such as Lyznet are much slower since they use external data generated by a simulator, which significantly restricts the interest in the method.

Surprisingly, the unsupervised method surpasses the supervised one in that example, even if it is not usually the case. However, we investigated larger $\mu$ values, and as $\mu$ increases, the coverage of the region of attraction decreases in the supervised case. This indicates that our algorithm is sensitive to the stiffness of the system, while LyzNet is more robust.

\subsection{High-dimensional example}
To conclude the simulation section, we consider the following $10$-dimensional system from \cite{LarsGrune} to showcase the efficiency in high dimensions:
\begin{equation} \label{eq:10d_system}
    \left\{
        \begin{array}{l}
             \dot{x}_1(t) = -x_1(t) + 0.5\,x_2(t) - 0.1\,x_9(t)^2, \\
             \dot{x}_2(t) = -0.5\,x_1(t) - x_2(t), \\
             \dot{x}_3(t) = -x_3(t) + 0.5\,x_4(t) - 0.1\,x_1(t)^2, \\
             \dot{x}_4(t) = -0.5\,x_3(t) - x_4(t), \\
             \dot{x}_5(t) = -x_5(t) + 0.5\,x_6(t) + 0.1\,x_7(t)^2, \\
             \dot{x}_6(t) = -0.5\,x_5(t) - x_6(t), \\
             \dot{x}_7(t) = -x_7(t) + 0.5\,x_8(t), \\
             \dot{x}_8(t) = -0.5\,x_7(t) - x_8(t), \\
             \dot{x}_9(t) = -x_9(t) + 0.5\,x_{10}(t), \\
             \dot{x}_{10}(t) = -0.5\,x_9(t) - x_{10}(t) + 0.1\,x_2(t)^2.
        \end{array}
    \right.
\end{equation}
This system is globally stable and admits a quadratic function. However, inside a fixed square domain $\mathcal{D}$, estimating the region of attraction is challenging. LyzNet computed a suboptimal quadratic Lyapunov function while, in our case, we got a coverage of $99.9\%$ in the supervised case. In that example, the unsupervised method did not manage to estimate a large region of attraction since high-order terms were not learned, providing a similar covering as the LyzNet method.

\section{Conclusion and Perspectives}\label{sec:conclusions}
In this paper, we addressed the problem of discovering maximal Lyapunov functions as a means of determining the region of attraction of a dynamical system.
To this end, we designed a novel neural network architecture, which we proved to be a universal approximator of (maximal) Lyapunov functions.
We formulated the problem of training the Lyapunov function as an unsupervised optimization problem with dynamical constraints, which we solved leveraging techniques from physics-informed learning. In particular, we proposed and analyzed a tailored, efficient training algorithm.
We further showed how to reformulate the problem into a supervised one by incorporating data, when available.
We compared the proposed method with the state of the art, evaluating the accuracy of their region of attraction approximations, for different systems with global or local stability, and with different dimensions. The proposed approach proved to be an effective tool for stability analysis of non-linear dynamical systems.

Several future research directions can be explored, building on this work.
First of all, the architecture of the Taylor-neural Lyapunov function can be modified in order to verify by design more properties of Lyapunov functions, thus reducing the complexity of the learning problem. Changing the architecture can also enhance computational efficiency and numerical stability when dealing with more complex systems, such as high-dimensional or uncertain systems.
Additionally, the proposed approach could be applied to systems with stable, but not asymptotically stable, equilibria.
Secondly, the verification approach proposed in \cite{liu2023physics} can be tailored to the proposed architecture to formally guarantee that the trained function is indeed a Lyapunov function.
Thirdly, our approach can simplify the controller synthesis method developed in \cite{gaby2022lyapunovnet} or in \cite{wang2024actor} by providing an accurate unsupervised learning approach.

\appendix

\subsection{Proof of Theorem~\ref{prop:lyapunov}}
\label{app:proof_lyapunov}

Let pick one maximal Lyapunov function $V^* = V_2 + V_3$ where $V_2 = \frac{1}{2} x^{\top} H^* x$ for $x \in \mathcal{D}$. Based on Lemma~\ref{lem:smoothness}, we can assume that $H^*$ is definite positive and $V^* \in C^3(\mathcal{D}, \mathbb{R})$.

\paragraph{Approximation of the region of attraction} 

Since $\hat{V}$ is a Lyapunov function, we get $\mathcal{R}(\hat{V}) \subseteq \mathcal{R}(V^*)$. The universal approximation theorem \cite[Theorem~4]{hornik1991approximation} states that for any $\varepsilon_1 \leq \varepsilon$, there exist $N > 0$, weights and biases such that 
\begin{equation} \label{eq:universal_approximation1}
    \forall i \in \mathcal{I},  \quad \sup_{x \in \mathcal{D} } \left| \hat{R}_N^{(i)}(x) - R_i(x) \right| \leq n^{-3} \varepsilon_1.
\end{equation}
Noting that $\mathcal{D} \subset [-1, 1]^n$, equations~\eqref{eq:Vstar_extended} and \eqref{eq:VhatFull} lead to: $\forall x \in \mathcal{D}$
\begin{equation} \label{eq:V3diff}
    | V^*(x) - \hat{V}(x) | = \left| \sum_{i \in \mathcal{I}} \left( R_i(x) - \hat{R}_N^{(i)}(x) \right) \prod_{k=1}^{n} x_k^{i_k} \right| \leq \varepsilon_1.
\end{equation}
Consequently, the left inclusion in \eqref{eq:set_inclusion} holds.

\paragraph{Positive-definiteness} Since the activation function $\psi \in C^{\infty}(\mathbb{R}, \mathbb{R})$, then $\hat{R}_N^{(i)}$ is bounded. Consequently, we get $\hat{V}(0) = 0$.

Note that $V_3 = o(V_2)$ and $V_2 = o(\| x \|^3)$, consequently, there is $\chi > 0$ such that for all $x \in \mathcal{D}$ with $\| x \| \leq \chi$, we get
\begin{equation} \label{eq:V2dominant}
    3 | V_3(x) | \leq V_2(x) \ \text{ and } \ 3 \left| \sum_{i \in \mathcal{I}} \prod_{k=1}^{n} x_k^{i_k} \right| \leq V_2(x)
\end{equation}
where the second inequality is obtained by noting that $\left| \sum_{i \in \mathcal{I}} \prod_{k=1}^{n} x_k^{i_k} \right| = O(\|x\|^3)$.

Let $V_{\chi} = \inf_{\| x \| \geq \chi, x \in \mathcal{D}} V^*(x) > 0$ and $\bar{\varepsilon} = V_{\chi} (n^3 + 1)^{-1} > 0$. The universal approximation theorem \cite[Theorem~4]{hornik1991approximation} states that for any $\varepsilon_2 \leq \min( 1, \bar{\varepsilon})$, there exist $N > 0$, weights and biases such that 
\begin{equation} \label{eq:universal_approximation2}
    \forall i \in \mathcal{I},  \quad \sup_{x \in \mathcal{D}} \left| \hat{R}_N^{(i)}(x) - R_i(x) \right| \leq n^3 \varepsilon_2.
\end{equation}
    
Using inequality~\eqref{eq:universal_approximation2} in \eqref{eq:V3diff} leads to $\forall x \in \mathcal{D}, \| x \| \geq \chi$:
\[
    \hat{V}(x) \geq V^*(x) - \left| \hat{V}(x) - V^*(x) \right| \geq V_{\chi}(x) - n^3 \varepsilon_2 \geq \varepsilon_2.
\]
Using \eqref{eq:V2dominant} and \eqref{eq:universal_approximation2}, we get $\forall x \in \mathcal{D}, \|x\| \leq \chi$:
\[
    \begin{array}{rl}
        \hat{V}(x) \!\!\!\!&\displaystyle= V_2(x) + \hat{V}_3(x) \geq V_2(x) - | \hat{V}_3(x) | \\
        &\displaystyle \geq V_2(x) - |V_3(x)| - \varepsilon_2 \left| \sum_{i \in \mathcal{I}} \prod_{k=1}^{n} x_k^{i_k} \right| \\
        &\displaystyle \geq \frac{2}{3} V_2(x) - \varepsilon_2 \left| \sum_{i \in \mathcal{I}} \prod_{k=1}^{n} x_k^{i_k} \right| \geq \frac{1}{3} V_2(x).
    \end{array}
\]

Then, the inequality~\eqref{eq:positive} holds. 
        
\paragraph{Negative time-derivative} Note first that
\[
    \forall x \in \mathcal{D}, \quad \frac{\partial \hat{V}}{\partial x}(x) = \frac{\partial V_2}{\partial x}(x) + o(\| x \|).
\]
Consequently, similarly to the proof of Lemma~\ref{lem:lemma1}, we get:
\[
    \forall x \in \mathcal{D}, \quad \frac{\partial \hat{V}}{\partial x}(x) \cdot f(x) = x^{\top} \left( A^{\top} H^* + H^* A \right) x + o(\| x \|^2).
\]
Note that the universal approximation \cite[Theorem~4]{hornik1991approximation} also holds for the first derivative, we get that for any $\varepsilon_3 > 0$, there exist $N$, weights, and biases such that
\begin{align*}
    \forall i \in \mathcal{I},  \quad &\sup_{x \in \mathcal{D} } \left| \hat{R}_N^{(i)}(x) - R_i(x) \right| \leq \varepsilon_3, \\
    & \sup_{x \in \mathcal{D} } \left| \frac{\partial \hat{R}_N^{(i)}}{\partial x}(x) - \frac{\partial R_i}{\partial x}(x) \right| \leq \varepsilon_3.
\end{align*}
Using Lemma~\ref{lem:lemma1}.2., a similar reasoning as in the previous subsection implies that $\frac{\partial \hat{V}}{\partial x} \cdot f < 0$ on $\mathcal{D} \backslash \{0\}$.

\paragraph{Conclusion} For $\varepsilon_4 = \min(\varepsilon_1, \varepsilon_2, \varepsilon_3)$, there exist $N$, weights and biases such that the universal approximation theorem holds for $\varepsilon_4$ then $\hat{V}$ is a Lyapunov function. By optimality of the region of attraction, the right part of the inclusion~\eqref{eq:set_inclusion} holds.

\subsection{Proof of Lemma~\ref{lem:positive_definite}}
\label{app:positive_definite}

Let $V$ be a Lyapunov function in $\mathcal{N}$, its positive-definiteness on $\mathcal{R}(V)$ must be ensured. Assume that there is $x \in \mathcal{R}(V)$ such that $V(x) \leq 0$. Let $\varepsilon \in (0, 1)$, since equation~\eqref{eq:decreasing} holds then:
\[
    \exists \xi > 0, \forall x \in \mathcal{D} \setminus \mathcal{N}, \quad V(x) \leq 1 - \varepsilon \Rightarrow \frac{\partial V}{\partial x}(x) \cdot f(x) \leq - \xi.
\]
Pick a point $x$ in $\mathcal{R}(V) \setminus \mathcal{N}$, then we get:
\begin{multline*}
    \forall t > 0, V(\phi(x,t)) = V(x) + \int_{0}^t \frac{\partial V}{\partial x}(\phi(x,s)) \cdot f(s) ds \\
    \leq V(x) - \xi t.
\end{multline*}
If $\phi(x,t)$ is not approaching the border of $\mathcal{R}(V)$ then there is a point in $\mathcal{D}$ such that $V$ is going to $-\infty$ which is impossible. However, if $\phi(x,t)$ is approaching the border of $\mathcal{R}(V)$ then it must approach the border of $\mathcal{D}$ but $V$ on the border is $1$ by definition. Consequently, $V(x)$ must be strictly positive.

\subsection{Proof of Lemma~\ref{lem:bound_phi}}
\label{app:bound_phi}

Assume equation~\eqref{eq:zubov_equality} holds with $\phi$ a definite positive function on the closure of $\mathcal{R}(\tilde{V})$. We then get the following for $x \in \mathcal{R}(\tilde{V})$:
\begin{equation} \label{eq:DVbeta_DV0}
    \begin{array}{rl}
        DV_{\beta}(x)\!\!\!\!\!&= DV_0(x) + \beta^2 \left( 1 - \tilde{V}(x) \right) \| x \|^p \\
        &= \left( 1 - \tilde{V}(x) \right) \left( - \phi(x) + \beta^2 \| x \|^p \right)
    \end{array}
\end{equation}
    
From Lemma~\ref{lem:lemma1} and since $P \succ 0$, $\gamma > 0$ and $\tilde{V}(0) = 0$, we get that $\phi(x) = - x^{\top} \left( A^{\top} \tilde{P} + \tilde{P}A \right) x + o(\| x \|^2)$ with $A^{\top} \tilde{P} + \tilde{P}A \prec 0$ and $\tilde{P} = P + \gamma^2 I$. Consequently, for any $\bar{\beta} \neq 0$, there exists a neighborhood $\mathcal{N}(\bar{\beta})$ around the origin such that $\phi(x) \geq \bar{\beta}^2 \| x \|^p$ for $p > 2$. 
    
Let $\bar{\mathcal{R}}$ be the closure of $\mathcal{R}(\tilde{V})$. Since $\mathcal{R}(\tilde{V})$ is a bounded subset of $\mathcal{D}$ then $\bar{\mathcal{R}}$ is a bounded close set. Since $\phi$ is continuous and definite positive on $\bar{\mathcal{R}}$, then it has a strictly positive minimum value on $\bar{\mathcal{R}} \setminus \mathcal{N}(\bar{\beta})$ attained at $x^* \in \bar{\mathcal{R}}$. Let $\beta^* = \sqrt{\phi(x^*) \| x^* \|^{-p}}$ and we get that $\phi(x) \geq {\beta^*}^2 \| x \|^p$ for $x \in \mathcal{R}(\tilde{V})$.

Combining these two previous facts, we get that for $\beta = \min(\bar{\beta}, \beta^*)$ then $\phi(x) \geq \beta^2 \| x \|^p$. Consequently, using \eqref{eq:DVbeta_DV0} and the fact that $\tilde{V} \in [0, 1]$ on $\mathcal{R}(\tilde{V})$ then $DV_{\beta} \leq 0$ on $\mathcal{R}(\tilde{V})$.

\subsection{Proof of Lemma~\ref{lem:equivalence_zubov}}
\label{app:equivalence_zubov}

$\Leftarrow$ Using Lemma~\ref{lem:bound_phi}, we get that \eqref{eq:integral_zubov} holds. Evaluating equation~\eqref{eq:zubov_equality} on the boundary of $\partial\mathcal{R}(\tilde{V}) \setminus \partial {\mathcal{D}}$ leads to \eqref{eq:zubov_boundary}.
    
$\Rightarrow$ From equation~\eqref{eq:zubov_boundary}, we get that
    \[
        \frac{\partial \tilde{V}}{\partial x}(x) \cdot f(x) = - \phi(x) \left( 1 - \tilde{V}(x) \right)
    \]
    where $\phi$ is well-defined on $\mathcal{D}$.
    If \eqref{eq:integral_zubov} holds, then \eqref{eq:inequality_zubov} holds and 
    \[
        \frac{\partial \tilde{V}}{\partial x}(x) \cdot f(x) \leq - \beta^2 \left( 1 - \tilde{V}(x) \right) \| x \|^p.
    \]
    On $\mathcal{R}(\tilde{V})$, we get $\phi(x) = - DV_0(x) \left( 1 - \tilde{V}(x) \right)^{-1}$ and then $\phi \geq \beta^2 \| x \|^p$ ensuring its positive definiteness.  

\subsection{Proof of Lemma~\ref{lem:eta}}
\label{app:eta}

Let $x_i \in \partial \mathcal{D}$. If there exists a unique $\delta_i \in (0, 1)$ such that $\delta_i x_i \in \partial \mathcal{R}(\tilde{V}) \setminus \partial \mathcal{D}$, then the function $g_{x_i}$ can be equivalently written as:
\[
    g_{x_i}(\eta_i) = \left\{ \begin{array}{ll}
        1 - \tilde{V}(\eta x_i) & \text{ if } \eta_i \leq \delta_i, \\
        - \xi \eta_i & \text{ otherwise.}
    \end{array}\right.
\]
    
Since $\eta_i(0) = 1 > \delta_i$ and $1 - \xi \bar{\alpha} \in (0, 1)$, the sequence is decreasing to $0$. The smallest attainable value in that case is $\inf_{\eta_i > \delta_i} (1 - \xi \bar{\alpha}) \eta_i = (1 - \xi \bar{\alpha}) \delta_i$. Since $\tilde{V}(\eta x) \leq 1$, $g_{x_i}(\eta_i)$ is positive when $\eta_i \leq \delta_i$. Consequently, $\eta_i(k) \geq (1 - \xi \bar{\alpha}) \delta_i$ at any $k$.

Since $\eta_i$ is a geometric sequence with a common ratio $1 - \xi \bar{\alpha} \in (0, 1)$, there exists $K > 0$, such that $\eta_i(k) > \delta_i$ and $\eta_i(k+1) \leq \delta_i$. After this $K$, the largest attainable value is $\sup_{\eta_i \leq \delta_i} \eta_i + \alpha_{\eta}(k) \left(1 - \tilde{V}(\eta x_i) \right) = \delta_i + \bar{\alpha}$ since \mbox{$\tilde{V} \leq 1$}. 

Combining these two facts leads to 
\begin{equation} \label{eq:interval_eta}
    \forall k > K, \quad \eta_i(k) \in [(1 - \xi \bar{\alpha}) \delta_i, \delta_i + \bar{\alpha}]
\end{equation}
which concludes the proof.


\bibliographystyle{IEEEtran} 
\bibliography{biblio}

\end{document}